\definecolor{cpalsncolor}{HTML}{A6611A}
\definecolor{cpalsrcolor}{HTML}{DFC27D}
\definecolor{cprandcolor}{HTML}{80CDC1}
\definecolor{cprandfcolor}{HTML}{018571}
\pgfplotsset{every axis plot/.append style={line width=1pt}}
\newcommand{\bigO}[1]{\ensuremath{\mathcal{O}\hspace{-.2em}\left({#1}\right)}}
\newcommand{\inlineBigO}[1]{\ensuremath{\mathcal{O}({#1})}}
\newcommand{\T}[2][]{\boldsymbol{#1\mathscr{\MakeUppercase{#2}}}}
\newcommand{\M}[1]{\ensuremath{\mathbf{\uppercase{#1}}}\xspace} 
\newcommand{\Mn}[2]{\M{#1}^{(#2)}\xspace} 
\newcommand{\MnC}[3]{\V{#1}^{(#2)}_{#3}} 
\newcommand{\V}[1]{\ensuremath{\mathbf{\lowercase{#1}}}\xspace} 
\newcommand{\R}{\ensuremath{\mathbb{R}}\xspace}
\newcommand{\trans}{\ensuremath{\mathsf{T}}}
\newcommand{\qtext}[1]{\quad\text{#1}\quad}
\newcommand{\LineRef}[1]{\hyperref[#1]{line~\ref{#1}}}
\DeclareMathOperator*{\argmin}{arg\,min}
\Crefname{ALC@unique}{Line}{Lines}
\newcommand{\modeidx}{\ensuremath{m}}
\newcommand{\samplesize}{\ensuremath{S}}
\pgfplotsset{every axis/.append style={
    label style={font=\footnotesize},
    tick label style={font=\footnotesize},  
    every axis plot post/.style={
          mark=*,
          every mark/.style={mark size=1pt}}
  }}
\begin{document}
\title{A Practical Randomized CP Tensor Decomposition%
  \thanks{This material is based upon work supported by the U.S. Department of Energy, Office of Science, Office of Advanced Scientific Computing Research, Applied Mathematics program.
    Sandia National Laboratories is a multimission laboratory managed and operated by National Technology and Engineering Solutions of Sandia, LLC., a wholly owned subsidiary of Honeywell International, Inc., for the U.S. Department of Energy's National Nuclear Security Administration under contract DE-NA-0003525.}}
\author{
  Casey Battaglino\thanks{Georgia Institute of Technology Computational Sci. and Engr. (\email{cbattaglino3@gatech.edu}).} \and
  Grey Ballard\thanks{Wake Forest University (\email{ballard@wfu.edu}).} \and
  Tamara G. Kolda\thanks{Sandia National Laboratories (\email{tgkolda@sandia.gov}).}
}

\maketitle

\begin{abstract}
  The CANDECOMP/PARAFAC (CP) decomposition is a leading method for the analysis of multiway data. 
The standard alternating least squares algorithm for the CP decomposition (CP-ALS) involves a series of highly overdetermined linear least squares problems. 
We extend randomized least squares methods to tensors and show the
workload of CP-ALS can be drastically reduced without a sacrifice in
quality.
We introduce techniques for efficiently preprocessing, sampling, and
computing randomized least squares on a dense tensor of arbitrary
order, as well as an efficient sampling-based technique for checking
the stopping condition.
We also show more generally that the Khatri-Rao product (used within
the CP-ALS iteration) produces conditions favorable for direct
sampling. 
In numerical results, we see improvements in speed, reductions in
memory requirements, and robustness with respect to initialization. 
\end{abstract}

\begin{keywords}
  canonical polyadic tensor decomposition, CANDECOMP/PARAFAC (CP), multilinear algebra, randomized algorithms, randomized least squares
\end{keywords}

\begin{AMS}
  15A69, 68W20
\end{AMS}

\section{Introduction} \label{sec:intro} 
The CANDECOMP/PARAFAC (CP) tensor decomposition is an important tool
for data analysis in applications such as chemometrics~\cite{MuStGrBr13}, biogeochemistry~\cite{JaCaYa14},
neuroscience~\cite{AcBiBiBr07,DaGiCaWa13,CoLiKuGo15}, signal processing~\cite{SiDeFuHu16}, cyber traffic analysis~\cite{MaGuFa11}, and many others.
We consider the problem of accelerating the alternating least squares (CP-ALS) algorithm using randomization.

Because randomized methods have been used successfully for solving
linear least squares problems~\cite{DrMaMuSa11,blendenpik,sketching}, it is natural that they might prove
beneficial to CP-ALS since its key kernel is the solution of a least
squares problem. However, the CP-ALS least squares subproblem has a
special structure that already greatly reduces its cost, so it is unclear whether or not sketching would be beneficial.
Nevertheless, we find that our randomized algorithms significantly reduce the memory and computational overhead of the CP-ALS process for dense tensors and moreover positively impact algorithmic robustness.
To the best of our knowledge, this is the first successful application of matrix sketching methods in the context of CP.
The contributions of this paper are as follows:
\begin{itemize}
\item 
The least squares coefficient matrix in the CP-ALS subproblem is a Khatri-Rao product of factor matrices. 
Our randomized algorithm prefers \emph{incoherent} matrices.
We prove that the coherence of the Khatri-Rao product is bounded 
above by the product of the coherence of its factors.
\item We introduce the CPRAND algorithm that uses a randomized least squares solver for the subproblems in CP-ALS and never explicitly forms the full Khatri-Rao matrices used in the subproblems.
We also introduce the complementary CPRAND-MIX algorithm that employs efficient \emph{mixing} to promote incoherence and thereby improves the robustness of the method.
\item We derive a novel, lightweight stopping condition that estimates
  the model fit error, and we prove its accuracy using Chernoff-Hoeffding bounds.
\item We demonstrate the speed and robustness of our algorithms over a
  large number of synthetic tensors as well as real-world data
  sets. In comparison with CP-ALS, CPRAND is faster and much less sensitive to
  the starting point. 
\end{itemize}
We give an example of our methods' fast time to solution in \cref{fig:tvsfmain},
comparing CPRAND and CPRAND-MIX with CP-ALS.
For the CPRAND methods, we use 100 sampled rows for each least squares solve.
The randomized methods converge much more quickly, in only a few
iterations. 
The fit is not monotonically increasing for the randomized methods due
to (small) variations in the solution to each randomized subproblem.
See~\cref{sec:experiments} for full details on problem generation and
further experiments.

\begin{figure}[tbhp]
  \centering \subfloat[Random $300\times300\times300$ tensor]{\label{fig:tvsf:3d}
    \begin{tikzpicture}
      \begin{axis}[width=.49\textwidth, height=2.1in, grid=major,
        xlabel={time ($s$)}, ylabel={fit},
        xmin=0,xmax=20,ymin=0.9,ymax=1,legend style={font=\smaller},
        legend pos=south east,style={thick}
        ]
        \addplot[color=cpalsncolor,mark=*,mark size=1pt] table [x=cpt,y=cpf] {./tvsf300.dat};
        \addplot[color=cprandcolor,mark=*,mark size=1pt] table [x=cprandt,y=cprandf] {./tvsf300.dat};
        \addplot[color=cprandfcolor,mark=*,mark size=1pt] table [x=cprandfftt,y=cprandfftf] {./tvsf300.dat};
        \addplot[mark=none, black, samples=2,very thin,dashed] coordinates {(0,0.99) (20,0.99)};
      \end{axis}
    \end{tikzpicture}
    \label{fig:tvsf}}
  \subfloat[Random $80\times80\times80\times80$ tensor]{\label{fig:tvsf:4d}
    \begin{tikzpicture}
      \begin{axis}[width=.49\textwidth, height=2.1in, grid=major,
        xlabel={time ($s$)},
        xmin=0,xmax=20,ymin=0.9,ymax=1,legend style={font=\smaller},
        legend entries={CP-ALS,CPRAND,CPRAND-MIX}, legend pos=south east,style={thick}
        ]
        \addplot[color=cpalsncolor,mark size=1pt,mark=*] table [x=cpt,y=cpf] {./tvsf80.dat};
        \addplot[color=cprandcolor,mark size=1pt,mark=*] table [x=cprandt,y=cprandf] {./tvsf80.dat};
        \addplot[color=cprandfcolor,mark size=1pt,mark=*] table [x=cprandfftt,y=cprandfftf] {./tvsf80.dat};
        \addplot[mark=none, black, samples=2,very thin,dashed] coordinates {(0,0.99) (20,0.99)};
      \end{axis}
    \end{tikzpicture}
    \label{fig:tvsf2}}
  \label{fig:tvsfmain}
  \caption{Runtime comparison for fitting the CP tensor decomposition
    on random synthetic tensors generated
    to have rank 5, factor collinearity of 0.9, and 1\% noise.
    We compare a single run of three methods using a target rank of 5.
    CPRAND and CPRAND-MIX use random initialization,
    100 sampled rows for each least squares solve. 
    CP-ALS uses HOSVD initialization.
    The marks indicate each iteration.
    The
    thin dashed black line represents a fit of 99\%, which is the best
    we expect when the noise is 1\%.} 
  \end{figure}
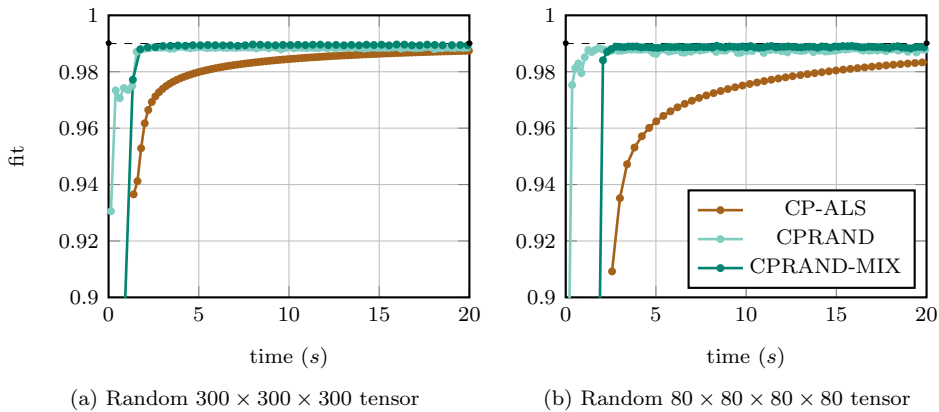

\section{Background and Definitions} \label{sec:background} 
In this section we provide information on key matrix and tensor operations, as well as randomized least squares.

\subsection{Matrix and Tensor Background}
A tensor is an element in a tensor product of one or more vector spaces. In data analysis it suffices to think about a tensor as a multidimensional array.
We represent a tensor as a Euler script capital letter, e.g., $\T{X}
\in \R^{I_1 \times \cdots \times I_N}$.  
The number of \emph{modes} (or dimensions) is referred to as the
\emph{order}, denoted here by $N$.  
The \emph{mode-$n$ fibers} of a tensor are the higher-order analogue
of matrix column and row vectors.  
The mode-$n$ \emph{unfolding} or \emph{matricization} of a tensor 
aligns the mode-$n$ fibers as the columns of an $I_n \times \prod_{m
  \neq n} I_m$ matrix. 
Assuming 1-indexing, tensor entry $x_{i_1,i_2,\dots,i_N}$ then maps to
entry $(i_n, j)$ of $\M{X}_{(n)}$ via the relation: 
\begin{equation}
\label{eqn:unfoldmapping}
j = 1+\sum_{\substack{k=1\\k\neq n}}^N (i_k -1)J_k, \qquad \text{where} \qquad J_k = \prod_{\substack{m=1\\m\neq n}}^{k-1} I_m.
\end{equation}

Given matrices $\M{A} \in \mathbb{R}^{I \times J}$ and $\M{B} \in \mathbb{R}^{K
  \times L}$, their \emph{Kronecker product} is 
\begin{displaymath}
  \M{A} \otimes \M{B} =
  \begin{bmatrix}
    a_{11} \M{B} & a_{12} \M{B} & \cdots & a_{1J} \M{B} \\
    \vdots & \vdots & \ddots & \vdots \\
    a_{I1} \M{B} & a_{I2} \M{B} & \cdots & a_{IJ} \M{B} \\
  \end{bmatrix}
  \in \R^{IK \times JL}.
\end{displaymath}
Assuming $K=L$, their \emph{Khatri-Rao product}, also known as the
\emph{matching columnwise Kronecker product}, is
\begin{displaymath}
  \M{A} \odot \M{B} = 
  \begin{bmatrix}
    \V{a}_1 \otimes \V{b}_1 & \V{a}_2 \otimes \V{b}_2
    & \cdots &  \V{a}_K \otimes \V{b}_K .   
  \end{bmatrix}
\end{displaymath}
Assuming $I=K$ and $J=L$, their \emph{Hadamard product} is
$\M{A}\circledast\M{B} \in \mathbb{R}^{I \times J}$, the elementwise
product of the matrices.
Three useful identities  are:
\begin{align}
(\M{A} \odot \M{B})^\trans(\M{A} \odot \M{B}) &= \M{A}^\trans\M{A} \circledast \M{B}^\trans\M{B}, \label{eq:KRGram} \\ 
\M{A}\M{B} \otimes \M{C}\M{D} &= (\M{A} \otimes \M{C})(\M{B} \otimes \M{D}) \label{eqn:krondist},\quad\text{and} \\
\M{A}\M{B} \odot \M{C}\M{D} &= (\M{A} \otimes \M{C})(\M{B} \odot \M{D}). \label{eqn:krdist}
\end{align}

The \emph{mode-$n$ tensor-times-matrix product} is a contraction
between a matrix and a tensor in its $n$th mode. The two operations
below are equivalent and can be computed in place (i.e., without
explicitly unfolding $\M{X}_{(n)}$)~\cite{intensli}:
\[\T{Y} = \T{X} \times_n \M{A} \quad \Leftrightarrow \quad \M{Y}_{(n)} = \M{A}\M{X}_{(n)}.\]
If a tensor is followed by a series of mode-$n$ products, its mode-$n$
matricization has a particular structure~\cite{Kolda:2009} that will
useful in the discussion of ``mixing'' in \cref{sec:cprand-mix}: 
\begin{align}
\begin{split}
\label{eqn:ttensor}\T{Y} & ={}  \T{X} \times_1 \Mn{U}{1} \cdots \times_N \Mn{U}{N} \quad \Leftrightarrow \\
\Mn{Y}{n} & ={}  \Mn{U}{n} \M{X}_{(n)}(\Mn{U}{N} \otimes \cdots \otimes \Mn{U}{n+1} \otimes \Mn{U}{n-1} \otimes \cdots \otimes \Mn{U}{1})^\trans.
\end{split}
\end{align}

The CP tensor decomposition aims to approximate an order-$N$ tensor as
a sum of $R$ rank-one
tensors~\cite{hitchcock-sum-1927, CANDECOMP, PARAFAC, Kolda:2009}:  
\begin{equation}
\label{eqn:cpform}
\T{X} \approx \T{\tilde{X}} = \sum_{r=1}^R \V{a}_r^{(1)} \circ \V{a}_r^{(2)} \circ \cdots \circ \V{a}_r^{(N)},
\end{equation}
where \emph{factor vector} $\V{a}_r^{(n)}$ has length $I_n$. 
Each rank-one tensor is called a \emph{component}.
The collection of all factor vectors for a given mode is called a 
\emph{factor matrix}:
\begin{displaymath}
  \Mn{A}{n} =
  \begin{bmatrix}
    \MnC{A}{n}{1} &
    \MnC{A}{n}{2} &
    \cdots &
    \MnC{A}{n}{r}
  \end{bmatrix}
  \in\R^{I_n \times R}.
\end{displaymath}
The mode-$n$ matricization of $\T{\tilde{X}}$ can be written in terms
the factor matrices as
\begin{equation}\label{eq:Zn}
  \M{\tilde{X}}_{(n)} = \M{A}^{(n)}\M{Z}^{(n)\trans}
  \qtext{where}
  \M{Z}^{(n)}=\M{A}^{(N)}\odot \cdots  \M{A}^{(n+1)}\odot
  \M{A}^{(n-1)} \odot \cdots \odot \M{A}^{(1)}.   
\end{equation}
We may alternatively represent \cref{eqn:cpform} by normalizing all
the factor vectors to unit length and expressing the product of the
normalization factors as a scalar weight $\lambda_r$ for each
component: 
\begin{equation}\label{eq:cpform+lambda}
  \T{\tilde{X}} = \sum_{r=1}^R \lambda_r \; \V{a}_r^{(1)} \circ
  \V{a}_r^{(2)} \circ \cdots \circ \V{a}_r^{(N)}.
\end{equation}

\subsection{Randomized Least Squares and Sketching}
\label{sec:sketching}
Sketching is a technique for solving linear algebra problems by constructing 
a smaller problem whose solution is a 
reasonable approximation to the original problem with high probability~\cite{sketching}. 
For instance, a large matrix may be formed by applying random sampling or random 
projections to form a smaller \emph{sketch} matrix. 
We focus on the case where a regression
problem $\min_{\V{x}}\| \M{A}\V{x} - \V{b} \|_2$ (with overdetermined
$\M{A} \in \mathbb{R}^{n\times d}$) is transformed using some random
projection $\M{M} \in \mathbb{R}^{\samplesize\times n}$, with $\samplesize\ll
n$, such that an exact solution to $\min_{\V{x}}\| \M{M}\M{A}\V{x} -
\M{M}\V{b} \|_2$ is an approximate solution to the original
problem~\cite{rokhlintygert,DrMaMuSa11,blendenpik}.

There are two leading sampling approaches for randomized least squares
problems. One involves sampling from the coefficient matrix in a
weighted manner, e.g. by computing (or estimating) \emph{leverage
  scores} for each row and sampling based on their distribution. The
other approach, which we use in this work, is to \emph{mix} the
coefficient matrix with the intention of evenly distributing leverage scores
across all rows in such a way that \emph{uniform} sampling is
effective. 
\begin{definition}
Given $\M{A} \in \mathbb{R}^{n\times d}, n > d$, the \emph{leverage score} of row $i$ of $\M{A}$ is $l_i = ||\M{U}(i,:)||_2^2$
for $i \in \{1,\dots ,n\}$ where $\M{U}$ contains the $d$ left singular vectors of $\M{A}$.
\end{definition}
Thus, the leverage score of a row corresponds in some sense to the
importance of that row in constructing the column-space of the
coefficient matrix. 

In 2007, Drineas et al.~\cite{DrMaMuSa11} presented a relative-error
least squares algorithm that gives a $1 + \epsilon$
approximation. They first mix the coefficient matrix using a
randomized Hadamard transform (discussed later), and then sample 
\begin{equation}
\label{eqn:numsamples}
\bigO{\max\{ d\log{(n)}\log{(d\log{(n)})}, d\log{(nd)}/\epsilon\}}
\end{equation}
rows of the resulting matrix before computing the solution using
normal equations. The dependence of the sampling size on $\epsilon$
makes this algorithm fairly impractical for typical direct
solvers. However, subsequent work by Rokhlin and
Tygert~\cite{rokhlintygert} applied a related sketching strategy to
the \emph{preconditioning} of a Krylov-subspace method, establishing a
relationship between sample size and condition number.  

Avron et al.\@ synthesized these concepts into a high-performance
solver called Blendenpik~\cite{blendenpik}. They first apply a
randomized Hadamard transform (or similar transform),
compute a QR decomposition of the result, and use its
$R$-factor as a preconditioner
for the standard LSQR solver. Additionally they show that the
condition number of their system depends on the \emph{maximal}
leverage score of the matrix, referred to as \emph{coherence}. 
\begin{definition}[\cite{coherence,candesrecht}]
\emph{Coherence} is  the maximum leverage score of $\M{A}$, i.e.,
\[ \mu(\M{A}) = \max_{i \in \{1,\dots,n\}} l_i, \]
where $l_i$ is the leverage score of the $i$th row of $\M{A}$. It
holds that $\frac{d}{n} \leq \mu(\M{A}) \leq 1$. 
\end{definition}
Intuitively, if a row of a matrix $\M{A}$ contains the only nonzero in a column then $\mu(\M{A})=1$ and any row-sampling $\M{SA}$ must include that row (which has leverage score 1) or it will be rank deficient. If coherence is close to 1, a uniform row-sampling is likely to be \emph{nearly} rank-deficient, leading to a poorly conditioned reduced-size least squares problem and an inaccurate approximate solution vector. 

In~\cref{sec:cprand} we prove that the standard formulation of CP-ALS
may increase incoherence, making uniform sampling effective in many
situations. However, in order to guarantee incoherence (w.h.p.)
regardless of input, it is necessary to preprocess with a mixing step.  

This mixing strategy relates to a more general class of
transformations that rely on quality guarantees provided by the
Johnson-Lindenstrauss Lemma~\cite{jlt}. This lemma specifies a class
of random projections that preserve the distances between all pairs of
vectors with reasonable accuracy. The \emph{fast}
Johnson-Lindenstrauss transform (FJLT) is able to avoid explicit
matrix multiplications by utilizing efficient algorithms such as the
fast Fourier transform (FFT), discrete cosine transform (DCT), or
Walsh-Hadamard Transform (WHT)~\cite{fjlt}. These transforms can
operate on a vector $\V{x} \in \mathbb{R}^n$ in $\inlineBigO{n\log_2 n}$
time. What these algorithms have in common is that
they improve incoherence, mixing information across every element of a vector, while at the
same time being orthogonal operations (i.e., a change of basis).  
The theoretical quality guarantees and theoretical computational costs are the same for all fast transforms.

The FJLT consists of three steps. First, each row of the coefficient
matrix is sign-flipped with probability $1/2$. This is equivalent to
computing $\M{D}\M{A}$ with diagonal matrix $\M{D}\in
\mathbb{R}^{n\times n}$, where each diagonal element is $\pm 1$ with
equal probability. Second, we apply the fast mixing operation
$\mathcal{F}$. Third we uniformly sample $\samplesize$ rows of the
result with uniform probability. Thus, the entire operation can be
written out as $\M{S}\mathcal{F}\M{D}\M{A}$, where $\M{S}$ is a row-sampling operator (containing unit row vectors $\V{e}_{i}$, for each sampled row $i$).
The reasoning behind first
applying $\M{D}$ is that input data is often sparse in the frequency
domain, and randomly flipping the signs of the coefficient matrix is an
orthogonal operation that spreads out the frequency domain of
the signal~\cite{fjlt}. In this paper we will exclusively use the FFT
for the $\mathcal{F}$ operation due to its ease of reproducibility and its efficiency in
MATLAB. 
Though portable, this has the result of making all data complex-valued, which we discuss in more detail later on.
We observed that using alternative transforms had no effect on the quality of our solutions, but real-valued transforms were slower to apply because of a lack of efficient implementations within MATLAB.

\section{Algorithms} \label{sec:algorithm} 
In this section we introduce CPRAND, which samples without mixing, and
CPRAND-MIX, which efficiently applies mixing before sampling. We first
recall the standard CP-ALS method, which is the starting point for our
modifications. 

\subsection{CP-ALS}

The standard method for fitting the CP model is alternating least
squares (CP-ALS) \cite{PARAFAC,Kolda:2009}. The method alternates
among the modes, fixing every factor matrix but $\Mn{A}{n}$ and
solving for it. From \cref{eq:Zn}, 
we see that we can find $\Mn{A}{n}$ by solving the linear least squares
problem given by
\begin{equation}
\label{eq:lls}
\argmin_{\M{A}^{(n)}} \|\M{X}_{(n)} - \M{A}^{(n)}\M{Z}^{(n)\trans}\|_F.
\end{equation}
In CP-ALS, we work with the normal equations for \cref{eq:lls}:
\begin{displaymath}
\M{X}_{(n)}\M{Z}^{(n)} = \M{A}^{(n)}(\M{Z}^{(n)\trans}\M{Z}^{(n)}),
\end{displaymath}
and solve for $\M{A}^{(n)}$ for given $\M{X}_{(n)}$ and $\M{Z}^{(n)}$.
By identity \cref{eq:KRGram}, we have 
\begin{displaymath}
\M{Z}^{(n)\trans}\M{Z}^{(n)} = \M{A}^{(N)\trans}\M{A}^{(N)} \circledast \dots \circledast \M{A}^{(n+1)\trans}\M{A}^{(n+1)} \circledast \M{A}^{(n-1)\trans}\M{A}^{(n-1)} \circledast \cdots \circledast \M{A}^{(1)\trans}\M{A}^{(1)}.
\end{displaymath}

The CP-ALS algorithm~\cite{Kolda:2009} is presented
in~\cref{alg:cpals}. Note the step where vector $\bm{\lambda}$ stores
normalization values of each column so that the final approximation is
as in \cref{eq:cpform+lambda}; this normalization helps alleviate
issues due to scaling ambiguity.

The initialization of the factor matrices in \LineRef{line:cpals:init}
can impact the performance of the algorithm.
There are many possible ways to do the the initialization.
One way is to
initialize is to set $\Mn{A}{n}$ to be the leading $R$ left singular
vectors of the mode-$n$ unfolding, $\M{X}_{(n)}$, and we call this
HOSVD initialization, as it corresponds to the factor matrices
in the rank-$(R{\times} {\cdots} {\times} R)$ Higher-Order SVD. 
A less expensive but less effective initialization is to
choose random factor matrices.

\begin{algorithm}
  \caption{CP-ALS}
  \label{alg:cpals}
  \begin{algorithmic}[1]\footnotesize
    \Function{$[\bm{\lambda},\set{\M{A}^{(n)}}]=$ CP-ALS}{$\T{X},R$}\Comment{$\T{X}\in\mathbb{R}^{I_1\times \cdots \times I_N}$}
    \State \label{line:cpals:init} Initialize factor matrices $\M{A}^{(2)}, \dots, \M{A}^{(N)}$
    \Repeat
    \For{$n=1,\dots, N$}
      \State $\M{V} \gets \M{A}^{(N)\trans}\M{A}^{(N)} \circledast \dots \circledast \M{A}^{(n+1)\trans}\M{A}^{(n+1)} \circledast \M{A}^{(n-1)\trans}\M{A}^{(n-1)} \circledast \cdots \circledast \M{A}^{(1)\trans}\M{A}^{(1)}$\label{line:cpals:Gram}
      \State \label{line:cpals:KR} $\M{Z}^{(n)} \gets \M{A}^{(N)}\odot \cdots  \odot \M{A}^{(n+1)}\odot \M{A}^{(n-1)} \odot \cdots \odot \M{A}^{(1)}$
      \State \label{line:cpals:MTTKRP} $\M{W} \gets \M{X}_{(n)}\M{Z}^{(n)}$
      \State \label{line:cpals:solve} Solve $\M{A}^{(n)}\M{V} = \M{W}$ for $\M{A}^{(n)}$        
      \State Normalize columns of $\M{A}^{(n)}$ and update $\bm{\lambda}$
    \EndFor
    \Until termination criteria met
    \State \textbf{return} $\bm{\lambda}$, factor matrices $\set{\M{A}^{(n)}}$
    \EndFunction
  \end{algorithmic}
\end{algorithm}

\subsubsection{Cost}
We consider the cost of a single outer iteration of CP-ALS.
In \LineRef{line:cpals:Gram}, the cost of computing the $m$th Gram
matrix is $R^2 I_m$; so the entire cost is  
$R^2 \sum_{m\neq n} I_m$ flops to compute all the Gram matrices plus
$\inlineBigO{R^2N}$ to multiply them all together to form $\M{V}$. 
The combination of \LineRef{line:cpals:KR} and
\LineRef{line:cpals:MTTKRP} form an operation  called the
matricized tensor times Khatri-Rao 
  product (MTTKRP). This is a frequent target of
optimization~\cite{stop1,phan,SPLATT,dfacto}.
The Khatri-Rao product in \LineRef{line:cpals:KR}
requires $\inlineBigO{R \prod_{m\neq n} I_m}$ flops (flops may be reduced at the cost of 
more memory by storing and reusing partial products, but we ignore this detail in our discussion).   
The computation of $\M{X}_{(n)}\M{Z}^{(n)}$ in
\LineRef{line:cpals:MTTKRP} is the most expensive step, with a cost
of $2R\prod_m I_m$ flops.  
We note that Phan et al.~\cite{phan} give a clever reorganization of
\LineRef{line:cpals:KR} and \LineRef{line:cpals:MTTKRP}, which avoids data
movement and so may reduce overall runtime. 
The cost of solving the linear system in \LineRef{line:cpals:solve}
using Cholesky decomposition is dominated by
the triangular solves with the Cholesky factors, which requires
$2R^2I_n$ flops. 
The overall cost of each outer iteration (updating each factor matrix
once) is $\inlineBigO{NR\prod_{n}I_n}$.  

If HOSVD initialization is used in \LineRef{line:cpals:init}, then the
costs of forming the mode-$n$ Gram matrix is $I_n \prod_{m}I_m$ and
the cost of computing the eigenvectors is $I_n^3$. Hence, the total
initialization cost is $(\sum_{n>1} I_n) \prod_m I_m + \sum_{n>1}
I_n^3$.
 
\subsection{CPRAND} \label{sec:cprand}
Consider the overdetermined least squares problem in \cref{eq:lls},
which is convenient to rewrite as
\begin{equation}\label{eq:lsalt} 
\argmin_{\M{A}^{(n)}}  \| \Mn{Z}{n} \M{A}^{(n)\trans} - \M{X}_{(n)}^\trans \|_F.
\end{equation}
The simplest sampling algorithm uniformly samples rows from
$\M{Z}_{(n)}$ and the corresponding rows from $\M{X}_{(n)}^{\trans}$.
We let $\samplesize$ denote the number of desired number of samples.
Without loss of generality, we assume $S > \max \set{I_1, \dots, I_N, R}$.
Let $\mathcal{S}$ denote the samples from $\set{1,\dots,\prod_{m\neq
    n}I_m}$ such that $| \mathcal{S} | = S$.
We do \emph{uniform sampling with replacement} which means that every
row has equal chance of being selected and the same row
may be selected more than once.
We can express the sampling operation as the application of a selection matrix $\M{S} \in \mathbb{R}^{\samplesize \times
  \prod_{\modeidx\neq n} I_\modeidx}$, where the rows of $\M{S}$ are
rows of the $\prod_{\modeidx\neq n} I_\modeidx \times \prod_{\modeidx\neq n}
I_\modeidx$ identity matrix. 

Forming the (full) Khatri-Rao product $\Mn{Z}{n}$ is expensive, so we
want to compute the sampled matrix $\M{S}\Mn{Z}{n}$ without ever
explicitly forming $\Mn{Z}{n}$.
Consider sampling the $j$th row of
$\M{Z}_{(n)}$. 
Using the mapping in \cref{eqn:unfoldmapping}, we can map $j$ to
indices $(i_1,\dots,i_{n-1},i_{n+1},\dots,i_N)$ (the $n$th index
is omitted). In fact, the $j$th row of $\Mn{Z}{n}$ is the Hadamard
product of the appropriate rows of the factor matrices, i.e.,
\begin{displaymath}
  \Mn{Z}{n}(j,:) = 
  \Mn{A}{1}(i_1,:) \circledast \cdots \circledast  
  \Mn{A}{n-1}(i_{n-1},:) \circledast
  \Mn{A}{n+1}(i_{n+1},:) \circledast \cdots \circledast 
  \Mn{A}{N}(i_{N},:) .
\end{displaymath}
This is illustrated in \cref{fig:skr} for a three-way tensor.
We give the algorithm for computing \emph{sampled Khatri-Rao} (SKR) in~\cref{alg:skr}, where
\texttt{idxs} is the set of tuples
\begin{displaymath}
  \{i_1^{(j)},\dots,i_{n-1}^{(j)},i_{n+1}^{(j)},\dots,i_N^{(j)}\} 
  \qtext{for} j \in \mathcal{S}.
\end{displaymath}
We assume these tuples are stacked in matrix
form for efficiency. Thus, each multiplicand $\M{A}_S^{(m)}$ is of size
$\samplesize \times R$. 

\begin{algorithm}[tbhp]
  \caption{Sampled Khatri-Rao Product}
  \label{alg:skr}
  \begin{algorithmic}[1]\footnotesize
    \Function{$\M{Z}_S=$ SKR}{$\M{S},\M{A}^{(N)},\dots,\M{A}^{(n+1)},\M{A}^{(n-1)},\dots,\M{A}^{(1)}$} 
    \State Retrieve $\mathtt{idxs}$ from $\M{S}$
    \State $\M{Z}_S \gets \M{1}$ \Comment{$\M{1} \in \mathbb{R}^{\samplesize \times R}$}
    \For{$m=1,\dots,n-1,n+1,\dots,N$}
      \State $\M{A}_S^{(m)} \gets \M{A}^{(m)}(\mathtt{idxs}(:,m),:)$ \Comment{MATLAB-style indexing}
      \State $\M{Z}_S \gets \M{Z}_S \circledast \M{A}_S^{(m)}$
    \EndFor
    \State \Return $\M{Z}_S$
    \EndFunction
  \end{algorithmic}
\end{algorithm}
\begin{figure}[t]
\centering 
\includegraphics[width=0.7\linewidth]{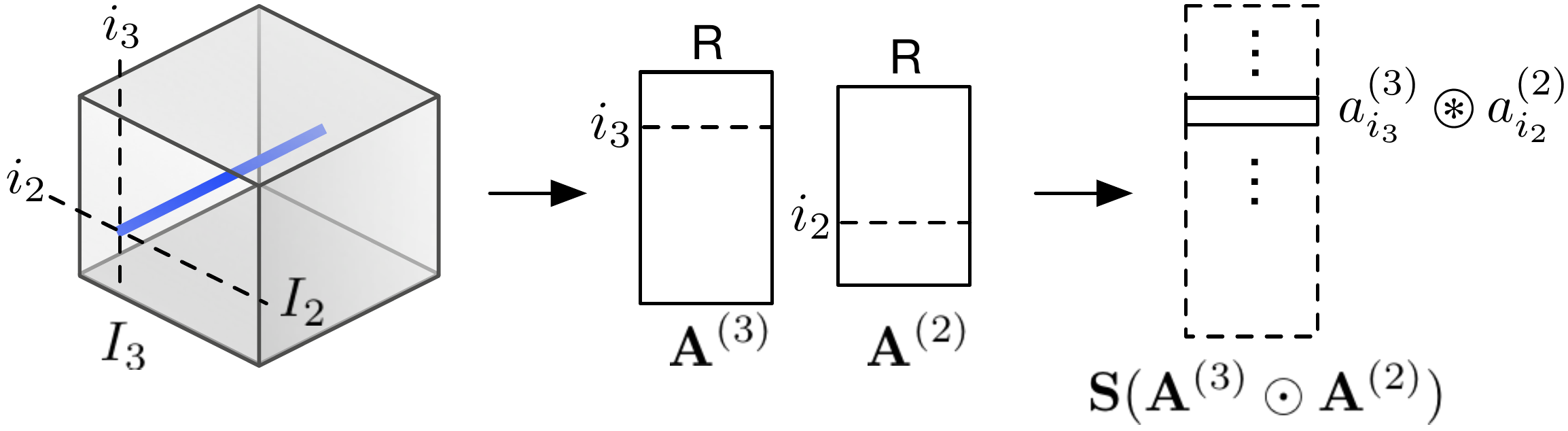}
\caption{Sampled Khatri-Rao rows correspond to sampled fibers in
  $\T{X}$.}
\label{fig:skr}
\end{figure}

In the same way we wanted to avoid forming $\Mn{Z}{n}$ explicitly, we
also want to avoid forming 
$\M{X}_{(n)}$ (i.e., in this case that means avoiding the data
movement). 
Instead, we observe that if we
sample the $j$th row of $\M{X}_{(n)}$, 
then we want the fiber
$\V{x}_{i_1,\dots,i_{n-1},:,i_{n+1},\dots,i_N}$ where we are using the same mapping
as for SKR. 
Thus, we can avoid matricization
and pull entries from the tensor directly to form
$\M{S}\M{X}_{(n)}^{\trans}$.

Our randomized version of CP-ALS is named CPRAND
and shown in \cref{alg:rcpals}, where we solve a sampled version of
the least squares problem in \cref{eq:lsalt}.
In \LineRef{line:rcpals:SKR}, we use the SKR from \cref{alg:skr} to
get the sampled version of $\Mn{Z}{n}$. 
In \LineRef{line:rcpals:Xs}, we sample rows of $\M{X}_{(n)}^{\trans}$.
In \LineRef{line:rcpals:QR}, we solve the sampled least squares
problem where 
the coefficient matrix $\M{Z}_S$ is of 
size $\samplesize \times R$ and
the corresponding sampled right hand side $\M{X}_S^{\trans}$ is of size
$\samplesize \times I_n$.
The solution $\Mn{A}{n}$ is of size $I_n \times R$.

In all of the experiments in this paper, a sample size of $\samplesize=10R\log{(R)}$ has proven sufficient, provided that the data is incoherent.

\begin{algorithm}[tbhp]
  \caption{CPRAND}
  \label{alg:rcpals}
  \begin{algorithmic}[1]\footnotesize
    \Function{
      CPRAND}{$\T{X},R,\samplesize$}\Comment{$\T{X}\in\mathbb{R}^{I_1\times \cdots \times I_N}$}
    \State Initialize factor matrices $\M{A}^{(2)}, \dots, \M{A}^{(N)}$
    \Repeat 
    \For{$n=1,\dots, N$}
      \State Define sampling operator $\M{S} \in \mathbb{R}^{\samplesize \times \prod_{\modeidx\neq n} I_\modeidx}$
      \vspace{0.5mm}
      \State \label{line:rcpals:SKR} $\M{Z}_S \gets
      \text{SKR}(\M{S},\M{A}^{(1)},\dots,\M{A}^{(n-1)},\M{A}^{(n+1)},\dots,\M{A}^{(N)})$
      \vspace{1mm}
      \State \label{line:rcpals:Xs} $\M{X}_S^{\trans} \gets \M{S}\M{X}_{(n)}^\trans$
      \State \label{line:rcpals:QR} $\M{A}^{(n)} \gets \displaystyle
      \argmin_{\M{A}} \left\|\M{Z}_S \M{A}^{\trans} - \M{X}_S^{\trans} \right\|_F$
      \State Normalize columns of $\M{A}^{(n)}$ and update $\bm{\lambda}$. 
    \EndFor
    \Until termination criteria met
    \State \textbf{return} $\bm{\lambda}$, factor matrices $\set{\M{A}^{(n)}}$
    \EndFunction
  \end{algorithmic}
\end{algorithm}

\subsubsection{Cost}
\label{sec:rcpals-cost}
The arithmetic cost of~\cref{alg:rcpals} comprises the cost of
sampling, to set up the smaller least squares problem, and the cost of
solving the least squares problem.
Generating $S$ random multiindices is $O(\samplesize N)$ operations.
Sampling the Khatri-Rao product in \LineRef{line:rcpals:SKR} using~\cref{alg:skr} requires $\samplesize R(N{-}1)$ flops, as each of the $\samplesize$ length-$R$ rows of $\M{Z}_S^{(n)}$ is formed as a Hadamard product of $N{-}1$ rows of the fixed factor matrices.
Sampling $\samplesize$ fibers from $\T{X}$ to form $\M{X}_S^{(n)}$
requires no flops, but it does require irregular data access to construct the $I_n\times \samplesize$ matrix and is actually the most time-consuming operation for large tensors.
Using QR to solve the reduced least squares problem in
\LineRef{line:rcpals:QR}
is $2 \samplesize R^2$ flops, the cost of computing
$\M{X}_S^{(n)}\M{Q}$ (applying $\M{Q}$) is $2\samplesize RI_n$
operations, and the cost of the triangular solve is $R^2I_n$
operations.
Assuming $I_n > R$ and $\samplesize > R$, the leading order cost is $2\samplesize R I_n$ operations.
The overall cost of each outer iteration (updating each mode once) is $\inlineBigO{\samplesize R\sum_{n}I_n}$. 

\subsubsection{Coherence in CPRAND}
The effectiveness of CPRAND depends on the coherence of coefficient matrix $\M{Z}^{(n)}$. Since $\M{Z}^{(n)}$ is formed as the Khatri-Rao product of factor matrices, it is natural to ask what effect the Khatri-Rao product has on coherence. 

More rigorously we now show that there is, at the very least, a
multiplicative reduction in coherence when we form $\Mn{Z}{n}$. We
begin by bounding the coherence of the Kronecker product and use this
to bound the coherence of the Khatri-Rao product.
\begin{lemma}
Given $\M{A} \in \mathbb{R}^{I \times J}$ and $\M{B} \in \mathbb{R}^{K \times L}$, $\mu(\M{A} \otimes \M{B}) = \mu(\M{A})\mu(\M{B})$.
\end{lemma}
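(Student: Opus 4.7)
The plan is to exploit the fact that the left singular subspace of a Kronecker product factorizes as a Kronecker product of the individual left singular subspaces, so that the leverage scores of $\M{A}\otimes\M{B}$ are products of leverage scores of $\M{A}$ and $\M{B}$. Taking the maximum then factorizes, yielding the claim.

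First I would recall that the leverage score of row $i$ is exactly the $i$th diagonal entry of the orthogonal projector onto the column space of the matrix and is therefore invariant under the choice of orthonormal basis for that space. This lets me pick any convenient $\M{U}_A$ and $\M{U}_B$ whose columns are orthonormal bases for the column spaces of $\M{A}$ and $\M{B}$, obtained for instance from (compact) SVDs $\M{A}=\M{U}_A\M{\Sigma}_A\M{V}_A^\trans$ and $\M{B}=\M{U}_B\M{\Sigma}_B\M{V}_B^\trans$.

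Next I would use the standard Kronecker identity \cref{eqn:krondist} to write
\[
\M{A}\otimes\M{B}=(\M{U}_A\otimes\M{U}_B)(\M{\Sigma}_A\otimes\M{\Sigma}_B)(\M{V}_A\otimes\M{V}_B)^\trans,
\]
and observe that $\M{U}_A\otimes\M{U}_B$ has orthonormal columns since $(\M{U}_A\otimes\M{U}_B)^\trans(\M{U}_A\otimes\M{U}_B)=\M{U}_A^\trans\M{U}_A\otimes\M{U}_B^\trans\M{U}_B=\M{I}$. Hence $\M{U}_A\otimes\M{U}_B$ is an orthonormal basis for the column space of $\M{A}\otimes\M{B}$ and can be used to compute leverage scores.

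Then I would identify the row of $\M{U}_A\otimes\M{U}_B$ corresponding to multi-index $(i,k)$ as $\M{U}_A(i,:)\otimes\M{U}_B(k,:)$, whose squared Euclidean norm factorizes as $\|\M{U}_A(i,:)\|_2^2\cdot\|\M{U}_B(k,:)\|_2^2$. In other words, the $(i,k)$ leverage score of $\M{A}\otimes\M{B}$ equals the product of the $i$th leverage score of $\M{A}$ and the $k$th leverage score of $\M{B}$. Taking the maximum over $(i,k)$ separates the two factors and yields $\mu(\M{A}\otimes\M{B})=\mu(\M{A})\mu(\M{B})$.

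The only mildly subtle step is the row-indexing of a Kronecker product and keeping the rank/orthonormal-basis bookkeeping correct when $\M{A}$ or $\M{B}$ is rank deficient; both are handled cleanly by working with the projector characterization of leverage scores. There is no hard analytic obstacle here, just careful verification of the row-norm factorization.
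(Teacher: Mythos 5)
Your proof is correct and follows essentially the same route as the paper: both factor the Kronecker product into an orthonormal factor (you use the compact SVD, the paper uses the reduced QR) times a remainder via \cref{eqn:krondist}, identify the rows of $\M{U}_A\otimes\M{U}_B$ as Kronecker products of rows, and use the multiplicativity of row norms to factor the maximum. Your handling of the rank-deficient case via the projector characterization is a small extra bit of care the paper omits, but it does not change the argument.
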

\begin{proof}
We take the reduced QR factorizations of the two terms and then apply~\cref{eqn:krondist}:
\[\M{A}\otimes \M{B} = \M{Q}_A\M{R}_A \otimes \M{Q}_B\M{R}_B = (\M{Q}_A \otimes \M{Q}_B)(\M{R}_A \otimes \M{R}_B)\]
This is a reduced QR factorization of $\M{A}\otimes \M{B}$, and the $\M{Q}$ factor has rows 
${\M{Q}_A(i,:)} \otimes {\M{Q}_B(j,:)}$ for every possible pair $(i,j)$. We know through simple arithmetic that $\|\V{a} \otimes \V{b}\| = \|\V{a}\| \|\V{b}\|$, so the max row norm of $\M{Q}_A \otimes \M{Q}_B$ is the product of the max row norms of $\M{Q}_A$ and $\M{Q}_B$.
\end{proof}
\begin{lemma} \label{lem:krp}
Given $\M{A} \in \mathbb{R}^{I \times J}$ and $\M{B} \in \mathbb{R}^{K \times L}$, $\mu(\M{A} \odot \M{B}) \leq \mu(\M{A})\mu(\M{B})$.
\end{lemma}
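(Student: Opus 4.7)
The plan is to reduce the Khatri-Rao case to the Kronecker case by recognizing that a Khatri-Rao product is a ``column slice'' of a Kronecker product, but to do this cleanly at the level of the $\M{Q}$ factors (not the original matrices), so that I can read off leverage scores directly. This avoids having to worry about what column selection does to leverage scores in general.

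Concretely, I start by taking reduced QR factorizations $\M{A} = \M{Q}_A\M{R}_A$ and $\M{B} = \M{Q}_B\M{R}_B$ and apply identity \cref{eqn:krdist} to write
\begin{displaymath}
  \M{A}\odot\M{B} = (\M{Q}_A\otimes\M{Q}_B)(\M{R}_A\odot\M{R}_B).
\end{displaymath}
The left factor has orthonormal columns (by the preceding lemma's argument), so I further take a reduced QR factorization $\M{R}_A\odot\M{R}_B = \M{Q}_R\M{R}_R$ and obtain the reduced QR factorization of $\M{A}\odot\M{B}$ as $[(\M{Q}_A\otimes\M{Q}_B)\M{Q}_R]\,\M{R}_R$, whose left factor again has orthonormal columns.

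Next I compute the squared norm of a generic row, indexed by the pair $(i_A,i_B)$. That row is $\bigl(\M{Q}_A(i_A,:)\otimes\M{Q}_B(i_B,:)\bigr)\M{Q}_R$. Since $\M{Q}_R$ has orthonormal columns, $\M{Q}_R\M{Q}_R^{\trans}$ is an orthogonal projector, hence a contraction in the row-vector sense, so
\begin{displaymath}
  \bigl\|\bigl(\M{Q}_A(i_A,:)\otimes\M{Q}_B(i_B,:)\bigr)\M{Q}_R\bigr\|_2^2
  \le \bigl\|\M{Q}_A(i_A,:)\otimes\M{Q}_B(i_B,:)\bigr\|_2^2
  = \|\M{Q}_A(i_A,:)\|_2^2 \, \|\M{Q}_B(i_B,:)\|_2^2,
\end{displaymath}
using $\|\V{a}\otimes\V{b}\|=\|\V{a}\|\,\|\V{b}\|$ as in the previous lemma. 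Maximizing over $(i_A,i_B)$ yields $\mu(\M{A}\odot\M{B}) \le \mu(\M{A})\,\mu(\M{B})$.

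The only step requiring care is the middle one: recognizing that post-multiplication by $\M{Q}_R$ cannot increase any row's $\ell_2$ norm. This is where the proof differs from the Kronecker case, where the corresponding map was a bijective isometry. Here, because Khatri-Rao selects only the ``matching'' columns, $\M{R}_A\odot\M{R}_B$ need not be square or invertible, so $\M{Q}_R$ is merely a tall matrix with orthonormal columns, giving an inequality rather than an equality. This is exactly the source of the $\le$ in the lemma (versus the $=$ in the Kronecker lemma), and nothing further is needed.
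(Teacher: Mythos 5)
Your proposal is correct and follows essentially the same route as the paper's proof: reduced QR of $\M{A}$ and $\M{B}$, the identity $\M{A}\odot\M{B} = (\M{Q}_A\otimes\M{Q}_B)(\M{R}_A\odot\M{R}_B)$, a second QR factorization $\M{R}_A\odot\M{R}_B = \M{Q}_R\M{R}_R$, and then bounding each row norm of $(\M{Q}_A\otimes\M{Q}_B)\M{Q}_R$ by that of the corresponding row of $\M{Q}_A\otimes\M{Q}_B$. Your ``$\M{Q}_R\M{Q}_R^{\trans}$ is an orthogonal projector, hence a contraction'' is the same mechanism the paper expresses as $\|\M{Q}_R^{\trans}\V{\hat q}_i\| \leq \|\M{Q}_R^{\trans}\|\,\|\V{\hat q}_i\|$ with $\|\M{Q}_R^{\trans}\| = 1$.
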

\begin{proof}
We again take the reduced QR factorizations of the two terms and then apply~\cref{eqn:krdist}: 
\begin{equation*}
\M{A} \odot \M{B} = \M{Q}_A\M{R}_A \odot \M{Q}_B\M{R}_B = (\M{Q}_A \otimes \M{Q}_B)(\M{R}_A \odot \M{R}_B).
\end{equation*}
We then take the QR decomposition of the second term:
\begin{equation*}
(\M{Q}_A \otimes \M{Q}_B)(\M{R}_A \odot \M{R}_B) = (\M{Q}_A \otimes \M{Q}_B)\M{Q}_R\M{R}_R = \M{\hat{Q}}\M{R}_R.
\end{equation*}
The reduced $\M{Q}$ term for $\M{A} \odot \M{B}$ is $\M{\hat{Q}} = (\M{Q}_A \otimes \M{Q}_B)\M{Q}_R$, so the norm of row $i$ in $\M{\hat{Q}}$ is $\hat{l}_i$, the $i$th leverage score of $\M{A} \odot \M{B}$. 
Letting $\V{\hat{q}}^\trans$ be row $i$ of $\M{Q}_A \otimes \M{Q}_B$,
\[ \hat{l}_i = \|\V{\hat{q}}_i^\trans \M{Q}_R \| = \|\M{Q}^\trans_R\V{\hat{q}}_i\| \leq \|\M{Q}_R^\trans\|\xspace\|\V{\hat{q}}_i\|. \]
$\M{Q}_R^\trans$ has orthonormal rows, so $\|\M{Q}_R^\trans\|=1$, yielding:
\begin{gather*}\mu(\M{A}\odot \M{B}) = \mu(\hat{\M{Q}}) = \max_i \hat{l}_i \quad
\leq \quad \max_i\|\V{\hat{q}}_i\| = \mu(\M{Q}_A \otimes \M{Q}_B) = \mu(\M{A})\mu(\M{B}).
\end{gather*}
\end{proof}
Using the notation of the prior proof, the exact coherence expands to:
\[ \mu(\M{A} \odot \M{B}) = \max_i \sqrt{\V{\hat{q}}_i^\trans\M{Q}_R\M{Q}_R^\trans\V{\hat{q}}_i}, \]
where the amount of truncation in $\M{Q}_R$ corresponds to how loose the inequality is. 
This bound is tight, e.g., for ${\M{A} = (1, 1)^\trans}$ and ${\M{B} = (1, -1)^\trans}$, but we typically see a large factor of coherence reduction.

\subsection{CPRAND-MIX}
\label{sec:cprand-mix}

\cref{lem:krp} shows that the Khatri-Rao product inherits incoherence from its factors. 
However, if the individual factor matrices happen to be highly
coherent, CPRAND may fail to converge.
We can
prevent this by \emph{mixing} the  terms before sampling occurs, as in~\cref{alg:rcpalsfft}. 
Consider the least squares problem in CP-ALS in \cref{eq:lsalt}.
Recall the FJLT introduced in~\cref{sec:sketching}. We \emph{could} apply such a transformation directly to the inner iteration before sampling, yielding
\begin{displaymath}
\argmin_{\M{A}^{(n)}} \| \mathcal{F} \M{D} \M{Z}^{(n)}\M{A}^{(n)\trans}  - \mathcal{F} \M{D}\M{X}_{(n)}\|_F,
\end{displaymath}
where $\M{D}$ is a diagonal random sign matrix and $\mathcal{F}$ is the FFT matrix. 
However, this would involve mixing the matricization of $\T{X}$ in each mode, as well as forming and mixing the full Khatri-Rao product at each iteration. 
Instead of mixing the rows of the entire Khatri-Rao product $\M{Z}^{(n)}$, we mix the rows of each factor $\M{A}^{(m)}$ individually. 
Using the distributive property in equation~\cref{eqn:krdist}, we see that this is equivalent to applying a Kronecker product of mixing terms:
\[\M{\hat{Z}}^{(n)} = \bigodot_{\substack{\modeidx=N \\ \modeidx\neq n}}^1\mathcal{F}_\modeidx\M{D}_\modeidx\M{A}^{(\modeidx)}
= \left(\bigotimes_{\substack{\modeidx=N\\\modeidx\neq n}}^1 \mathcal{F}_\modeidx\M{D}_\modeidx\right) \M{Z}^{(n)}
=\left(\bigotimes_{\substack{\modeidx=N\\\modeidx\neq n}}^1 \mathcal{F}_\modeidx\right)\left(\bigotimes_{\substack{\modeidx=N\\\modeidx\neq n}}^1 \M{D}_\modeidx\right) \M{Z}^{(n)}.\]
Applying this operation to the CP least squares problem leads to the mixed formulation of the least squares problem:
\begin{equation}
\label{eq:cpalsmixed}
\argmin_{\M{A}^{(n)}} \left\| 
  \left(\bigotimes_{\substack{\modeidx=N\\\modeidx\neq n}}^1
    \mathcal{F}_\modeidx\M{D}_\modeidx \right)
  \M{Z}^{(n)}\M{A}^{(n)\trans}
  -      
  \left(\bigotimes_{\substack{\modeidx=N\\\modeidx\neq n}}^1
    \mathcal{F}_\modeidx\M{D}_\modeidx\right)
  \M{X}_{(n)}^{\trans} 
\right\|_F.
\end{equation}
The Kronecker product preserves orthogonality (unitarity), so \cref{eq:cpalsmixed} is equivalent to \cref{eq:lsalt}.

Note that while we do not prove that applying uniform sampling to the
columns of \cref{eq:cpalsmixed} is an FJLT (we conjecture it is), we
know that $\mu(\M{\hat{Z}}^{(n)})$ is upper bounded
by~\cref{lem:krp}, so we expect uniform sampling to be sufficient for
an accurate approximate solution. 
Using equation~\cref{eqn:ttensor} we can see that the second term of \cref{eq:cpalsmixed} is equivalent to
\begin{equation}
\label{eq:unmixX}
\bigl( \M{D}_n\mathcal{F}_n^{*}\M{\hat{X}}_{(n)} \bigr)^{\trans}, 
\quad\text{where}\quad \T{\hat{X}} = \T{X} \times_1 \mathcal{F}_1\M{D}_1 \cdots\times_N  \mathcal{F}_N\M{D}_N,
\end{equation}
where we note that $\M{D}_n$ is its own inverse, $\mathcal{F}_n$ is
unitary in the case of the FFT, and the asterisk denotes the conjugate transpose.
Using a uniform sampling matrix $\M{S}$, our reduced problem has the form
\begin{equation}
  \label{eq:cpalsmixedsampled}
  \argmin_{\M{A}^{(n)}} 
  \left\| 
    \left(\M{S}\M{\hat Z}^{(n)}\right)\M{A}^{(n)\trans} -
    \M{D}_n\mathcal{F}_n^{*}
    \left(
      \M{S} 
      \M{\hat{X}}_{(n)}^{\trans}
    \right)^{\trans}
  \right\|_F.
\end{equation}

This approach is presented as CPRAND-MIX in \cref{alg:rcpalsfft2}.
We highlight two computational optimizations in \cref{alg:rcpalsfft2}.
First, by \cref{eq:unmixX}, we can apply a single upfront mixing of
the tensor in all modes in a preprocessing step
(\LineRef{line:rcpalsfft2:mixX}). Then, at each inner iteration for
mode $n$, we \emph{unmix} the tensor in only mode $n$.
Furthermore, this unmixing can be done \emph{after} the columns of $\M{\hat{X}}_{(n)}$ are sampled, as shown in \LineRef{line:rcpalsfft2:Xs}.
Second, we can avoid mixing the entire Khatri-Rao product matrix at each step because only one factor matrix changes each iteration.
Thus, we mix the $n$th factor matrix to produce $\M{\hat{A}}^{(n)}$ in
\LineRef{line:rcpalsfft2:mixA} only once per outer iteration,
immediately after computing $\M{A}^{(n)}$. Then, we can sample the mixed Khatri-Rao product without forming it explicitly using \cref{alg:skr}.

\begin{algorithm}
  \caption{CPRAND-MIX}
  \label{alg:rcpalsfft2}
  \begin{algorithmic}[1]\footnotesize
    \Procedure{CPRAND-MIX}{$\T{X},R,S$}\Comment{$\T{X}\in\mathbb{R}^{I_1\times \cdots \times I_N}$}
    \State Initialize factor matrices $\M{A}^{(m)}, \; m \in \{ 2 \dots N\}$
    \State Define random sign-flip operators $\M{D}_m$ and unitary matrices $\M{\mathcal{F}}_m, \; m \in \set{1,\dots, N}$
    \State Mix factor matrices: $\M{\hat{A}}^{(m)}\gets \M{\mathcal{F}}_m \M{D}_m \M{A}^{(m)}, \; m \in \{ 2 \dots N\}$
    \State \label{line:rcpalsfft2:mixX} Mix tensor: $\T{\hat{X}} \gets \T{X} \times_1 \mathcal{F}_1\M{D}_1 \cdots\times_N  \mathcal{F}_N\M{D}_N$
    \Repeat
    \For{$n=1,\dots, N$}
    \State Define sampling operator $\M{S} \in \mathbb{R}^{\samplesize \times \prod_{\modeidx\neq n} I_\modeidx}$
      \vspace{1mm}
    \State $\M{\hat{Z}}_S \gets  SKR(\M{S},\M{\hat{A}}^{(N)},\dots,\M{\hat{A}}^{(n+1)},\M{\hat{A}}^{(n-1)},\dots,\M{\hat{A}}^{(1)})$
    \State \label{line:rcpalsfft2:Xs} $\M{\hat{X}}_S^{\trans} \gets \M{D}_n\mathcal{F}_n^{*} \left(\M{S}\M{\hat{X}}_{(n)}^\trans\right)^{\trans}$
    \State \label{line:rcpalsfft2:LS} $\M{A}^{(n)} \gets
    \displaystyle \argmin_{\M{A}} 
    \left\| \M{\hat{Z}}_S \M{A}^{\trans}  
      - \M{\hat{X}}_S^{\trans} \right\|_F$
    subject to $\M{A}$ being real-valued
    \State Normalize columns of $\M{A}^{(n)}$ and update $\bm{\lambda}$
      \vspace{1mm}
    \State \label{line:rcpalsfft2:mixA} $\M{\hat{A}}^{(n)} \gets \M{\mathcal{F}}_n \M{D}_n \M{A}^{(n)}$
    \EndFor
    \Until termination criteria met
    \State \textbf{return} $\bm{\lambda}$, factor matrices $\set{\M{A}^{(n)}}$
    \EndProcedure
  \end{algorithmic}
\end{algorithm}
 
Next, we point out a subtlety within \cref{alg:rcpalsfft2} due to our use of the FFT, which is complex valued.
Assuming the input tensor is real valued, we seek a CP decomposition that is also real valued.
However, the least squares problem in \LineRef{line:rcpalsfft2:LS} involves complex-valued matrices, and the solution can be also be complex valued.
We note that if the least squares problem is mixed but not sampled, and if the Khatri-Rao product is full rank, then the solution would still be real valued.
However, because sampling implies that we are solving the original least squares problem approximately, the solution can drift into the complex plane.
In order to maintain a real-valued CP approximation, we solve the
least squares problem over only real values by using the equivalence
\begin{equation}
\label{eq:rcLS}
\argmin_{\V{x}\in\mathbb{R}^n} \|\M{A}\V{x}-\V{b}\|_2  
= \argmin_{\V{x}\in\mathbb{R}^n} \left\|\begin{bmatrix} \Re(\M{A}) \\
    \Im(\M{A}) \end{bmatrix} \V{x}-
  \begin{bmatrix} \Re(\V{b}) \\ \Im(\V{b}) \end{bmatrix} \right\|_2,
  \end{equation}
where $\M A\in\mathbb{C}^{m\times n}$ and
$\V{b}\in\mathbb{C}^m$.
When a real-valued orthogonal matrix is used instead of the FFT (such as the DCT or the WHT), the aforementioned subtlety can be ignored.
In fact, the algorithm can be further simplified.
We note that if we write the new ALS update in terms of the pseudoinverse we get the following:
\begin{equation*}
\label{eqn:mixupdate1}
\M{A}^{(n)} \gets \M{D}_n\mathcal{F}_n^{*}(\M{\hat{X}}_{(n)}\M{S}^T)[(\M{S}\M{\hat{Z}}^{(n)})^\trans]^\dag.
\end{equation*}
This update implies that we store the unmixed factor matrices, mix them before each iteration, and then unmix afterwards. 
We can actually avoid this process by maintaining the factor matrices in their mixed state for the duration of the algorithm:
\begin{equation*}
\label{eqn:mixupdate2}
\M{\hat{A}}^{(n)} \gets \M{\mathcal{F}}_n \M{D}_n \M{A}^{(n)} = \M{\hat{X}}_{(n)}\M{S}^T[(\M{S}\M{\hat{Z}}^{(n)})^\trans]^\dag.
\end{equation*}
This transformation yields the following least squares problem:
\begin{equation*}
\argmin_{\M{\hat A}^{(n)}} \left\| \M{\hat A}^{(n)}(\M{S}\M{\hat Z}^{(n)})^\trans - \M{\hat{X}}_{(n)} \M{S}^\trans\right\|_F,
\end{equation*}
which is equivalent to \LineRef{line:rcpals:QR} in \cref{alg:rcpals}, just in the mixed basis.
Thus, in the case of real-valued transforms, it is sufficient to mix $\T{X}$, call CPRAND as a subroutine, and then unmix the solution factors, as shown in~\cref{alg:rcpalsfft}. 

\begin{algorithm}
  \caption{CPRAND-PREMIX}
  \label{alg:rcpalsfft}
  \begin{algorithmic}[1]\footnotesize
\Function{CPRAND-PREMIX}{$\T{X},R,\samplesize$}\Comment{$\T{X}\in\mathbb{R}^{I_1\times \cdots \times I_N}$}
    \State Define random sign-flip operators $\M{D}_m$ and orthogonal matrices $\M{\mathcal{F}}_m, \; m \in \set{1,\dots, N}$
    \State \label{line:rcpalsfft:mix} Mix: $\T{\hat{X}} \gets \T{X} \times_1 \mathcal{F}_1\M{D}_1 \times\cdots\times_N  \mathcal{F}_N\M{D}_N$
    \State $[\bm{\lambda},\{\M{\hat{A}}^{(n)}\}]=\text{ CPRAND}(\T{\hat{X}},R,\samplesize)$
    \For{$n=1,\dots,N$}
      \State \label{line:rcpalsfft:unmix} Unmix: $\M{A}^{(n)} = \M{D}_n \mathcal{F}_n^\trans \M{\hat{A}}^{(n)}$
    \EndFor
    \State \textbf{return} $\lambda$, factor matrices $\set{\M{A}^{(n)}}$
    \EndFunction
  \end{algorithmic}
\end{algorithm}

\subsubsection{Cost}
\label{sec:rcpalsfft-cost}

In the case of real-valued orthogonal transformations, as we show in \cref{alg:rcpalsfft}, we can implement CPRAND-MIX using CPRAND along with pre-processing (mixing) and post-processing (unmixing) steps.
The initial mixing of the tensor in line \ref{line:rcpalsfft:mix} requires significant upfront cost, but the unmixing of the factor matrices in line \ref{line:rcpalsfft:unmix} is relatively cheap.
Compared to \cref{alg:rcpals}, the dominant extra cost (of line \ref{line:rcpalsfft:mix}) is 
\begin{equation}
\label{eq:mixcost}
\bigO{\sum_{k=1}^N\prod_{\modeidx}[I_\modeidx \log I_k]} = \bigO{\left(\prod_\modeidx I_\modeidx\right) \log \left(\prod_\modeidx I_\modeidx\right)}.
\end{equation}

The cost of \cref{alg:rcpalsfft2} includes the mixing cost given by \cref{eq:mixcost}, and the cost per iteration is slightly larger than \cref{alg:rcpals}.
In particular, due to the complex values and complex arithmetic, the cost of each operation is increased by a constant factor between 2 and 4.
The leading order cost of \cref{alg:rcpals} comes from solving the least squares problem in \LineRef{line:rcpals:QR}, and the leading order cost of \cref{alg:rcpalsfft2}  also comes from solving the least squares problem (\LineRef{line:rcpalsfft2:LS}).
Following \cref{eq:rcLS}, the least squares problem is solved in real arithmetic, but the number of rows of the coefficient matrix is twice as many as in \cref{alg:rcpals}.
This yields an increase in the leading order per-iteration cost 
of a factor of 2 compared to CPRAND. 

\subsection{Stopping Criteria} \label{sec:stopping}
Given the original tensor $\T{X}$ and CP approximation $\T{\tilde{X}}$ from ~\cref{eqn:cpform}, 
the relative residual norm is $\sfrac{\|\T{X} - \T{\tilde{X}}\|}{\|\T{X}\|}$.
However, the sampled least squares computations are so inexpensive
that checking this stopping condition
can take longer than the rest of the iteration. This is
particularly true for out-of-core problem sizes~\cite{VeLa16}.  
Thus, we propose a different sampling-based method for computing an
approximate stopping 
criterion and present a theoretical rationale for why it works.
One tempting strategy would be to track the norm of the residual within the sampled least squares computation. Unfortunately, the variance of this value is very high due to the small number of fibers sampled from $\T{X}$. Thus, we propose an alternative approach.

We use the notation $[N]$ to denote the set $\{1,\dots,N\}$. For a
given natural number $\hat P$, let 
\begin{displaymath}
  \mathcal{\hat I} \subset \mathcal{I} \equiv [I_1] \otimes [I_2] \otimes \cdots \otimes [I_N]
\end{displaymath}
be a uniform random subset of $\hat P$ indices of $\T{X}$. Let $\V{i}=(i_1, i_2,\dots,i_N)$ denote a multiindex, i.e., $x_{\V{i}} = x_{i_1i_2\cdots i_N}$.
Define $\T{E} = \T{X} - \T{\tilde{X}}$, and observe that
\begin{displaymath}
  \| \T{E} \|^2 = \sum_{\V{i} \in \mathcal{I}} e_{\V{i}}^2 
  = P \mu 
  \qtext{where} P = \prod_n I_n  
  \qtext{and}\mu = \text{mean} \set{e_{\V{i}}^2 | \V{i}\in\mathcal{I}}.
\end{displaymath}
We can approximate the mean $\mu$ with the mean
$\hat \mu$ of the subset of entries in $\mathcal{\hat I}$:
\begin{displaymath}
  \mu \approx \hat \mu 
  \qtext{where}
  \hat \mu = \text{mean} \set{e_{\V{i}}^2 | \V{i}\in\mathcal{\hat I}}.
\end{displaymath}
The relative residual norm can be estimated as
\begin{displaymath}
  \frac{\|\T{E}\|}{\|\T{X}\|}
  =
  \frac{(P \mu)^{1/2}}{\|\T{X}\|}  \approx
  \frac{(P \hat \mu)^{1/2}}{\|\T{X}\|}. 
\end{displaymath}
We can now apply the multiplicative Chernoff-Hoeffding bounds if we
make some assumptions on our data~\cite{rabook}. Assume the errors are
drawn from a finite distribution and $\mu$ is the
\emph{true mean} (or close enough to it). This is a reasonable
assumption is we assume that the CP models elicits the low-rank
structure which is contaminated by noise. We do not make assumptions
about what the specific distribution is.
Our samples are assumed to be i.i.d. 
Let $\mu_{\text{max}} = \max_{\V{i}}(e_\V{i}^2)$ be the maximum
allowable value. For any $\gamma > 0$
we have the following very conservative upper- and lower-tail bounds:
\begin{gather}
\label{eqn:chernoff}
\begin{split}
\Pr\{\hat \mu \geq (1+\gamma)\mu\} \leq \exp\left(-\frac{2\gamma^2 \mu^2 \hat P}{\mu_{\text{max}}^2}\right), \\
\Pr\{\hat \mu \leq (1-\gamma)\mu\} \leq \exp\left(-\frac{\gamma^2 \mu^2 \hat P}{\mu_{\text{max}}^2}\right).
\end{split}
\end{gather}
We can then write this in terms of the the residual norm:
\begin{lemma} For any $\gamma \in (0,1)$, we can bound the relative
  difference in the approximated and true error as
\begin{align*}
\Pr\left\{\sqrt{1-\gamma} \leq \frac{(P \hat \mu)^{1/2}}{\|\T{E}\|} \leq \sqrt{1+\gamma}\right\} \leq \exp\left(-2\frac{\gamma^2 \mu^2 \hat P}{\mu_{\text{max}}^2}\right).
\end{align*}
\end{lemma}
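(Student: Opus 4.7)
The plan is to translate the event on the normalized residual into an event on the sample mean $\hat\mu$, and then invoke the Chernoff-Hoeffding inequalities already stated in \cref{eqn:chernoff}. Using the identity $\|\T{E}\|^2 = P\mu$ established immediately before the lemma, I would rewrite
\[
\frac{(P\hat\mu)^{1/2}}{\|\T{E}\|} \;=\; \sqrt{\hat\mu/\mu}.
\]
Because $t\mapsto \sqrt{t}$ is monotone on $[0,\infty)$, the two-sided event $\sqrt{1-\gamma}\le \sqrt{\hat\mu/\mu}\le \sqrt{1+\gamma}$ is exactly the event $(1-\gamma)\mu \le \hat\mu \le (1+\gamma)\mu$, i.e.\ the sample mean sits within a multiplicative $\gamma$-window of the true mean.

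Next I would pass to complements and apply a union bound. The event fails precisely when either $\hat\mu \ge (1+\gamma)\mu$ or $\hat\mu \le (1-\gamma)\mu$, so the two tail bounds of \cref{eqn:chernoff} give
\[
\Pr\{\text{outside window}\} \;\le\; \exp\!\left(-\frac{2\gamma^2\mu^2\hat P}{\mu_{\max}^2}\right) + \exp\!\left(-\frac{\gamma^2\mu^2\hat P}{\mu_{\max}^2}\right).
\]
Bounding the upper-tail term by the (larger) lower-tail term yields a clean bound of $2\exp(-\gamma^2\mu^2\hat P/\mu_{\max}^2)$, and the sharper upper-tail bound alone already reproduces the exponent $2\gamma^2\mu^2\hat P/\mu_{\max}^2$ displayed in the statement. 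I would briefly remark that the invocation of \cref{eqn:chernoff} requires the standing assumptions already used to derive those bounds, namely that the $e_{\V{i}}^2$ are i.i.d.\ samples from a distribution supported in $[0,\mu_{\max}]$ with mean $\mu$.

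The actual algebraic step is a one-line substitution, so the real obstacle is interpretive rather than technical: as displayed, the lemma reads ``$\Pr\{\text{inside interval}\}\le \exp(\cdots)$,'' which would say that accuracy is exponentially unlikely. I would proceed under the natural reading that the inequality is meant to bound the \emph{failure} probability (equivalently, that the success probability is at least $1 - \exp(\cdots)$), so the conclusion reads ``with high probability, $(P\hat\mu)^{1/2}/\|\T{E}\|$ lies in $[\sqrt{1-\gamma},\sqrt{1+\gamma}]$.'' Under that reading the proof is simply the rewriting step combined with the union bound above, and no further work is required.
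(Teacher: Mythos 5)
Your proof is correct and is essentially the paper's own argument: the paper's entire proof is ``multiply both sides within the probability terms of \cref{eqn:chernoff} by $P$, take the square root and simplify,'' which is exactly your substitution $\|\T{E}\|^2 = P\mu$ followed by monotonicity of the square root. Your two additional observations --- that the displayed inequality bounds the wrong event (it should bound the probability of falling \emph{outside} the interval, or equivalently lower-bound the success probability by $1-\exp(\cdots)$), and that a genuine two-sided bound requires a union bound yielding the \emph{sum} of the two tail exponentials rather than the single upper-tail term displayed --- are both valid criticisms of the lemma as stated in the paper, not gaps in your argument.
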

\begin{proof} Multiply both sides within the probability terms of~\cref{eqn:chernoff} by $P$, take the square root and simplify.
\end{proof}
Let $0 \leq \mu \leq 0.5$ (if the error is higher than this we are far
from terminating). Let $\sqrt{1+\gamma} = 1.05$; that is, we allow our
estimate to be wrong by $5\%$ multiplicatively. Then
$\gamma=0.1025$. A confidence of $98\%$ is maintained when $\hat P
\geq 372 \mu_{\text{max}}^2$. The fortunate aspect of this
conservative bound is that we expect $\mu_{\text{max}}$ to become
smaller and smaller as the ALS algorithm proceeds. In general, we 
usually assume $\mu_{\max} = 1$.

The cost of computing $\hat P \hat \mu$ to get an estimate of the error is $\inlineBigO{\hat P RN}$ flops.
For each sampled entry of the tensor, the corresponding entry of the model tensor must be computed via a sum of $R$ terms, each with $N+1$ multiplicands (including the weights).
For comparison, the cost of computing the exact error is $\inlineBigO{R\prod_n I_n}$ flops.

The relative residual of CP-ALS is guaranteed to decrease at each iteration, making a termination condition easy to specify (stop when the change in error drops below a threshold). 
Termination of CPRAND is more complicated because neither the true nor approximate error are guaranteed to decrease at each iteration. In practice, the simplest strategy is to store the lowest relative error achieved so far, and terminate when a particular number of iterations have elapsed without any reduction in this minimum. 

\section{Experiments} \label{sec:experiments} 

\label{sec:expts}
We evaluate the performance of the randomized algorithms on both
synthetic and real-world data. The  synthetic experiments
enable us to generate tensors from known latent factors and thus
measure whether the ground truth is recovered. 
We also consider 
real data sets which are free of the simplifying
assumptions of synthetic data (e.g., Gaussian noise) and demonstrate
our algorithms' effectiveness in practice.  

All experiments are run on MATLAB R2016a using Tensor Toolbox v2.6 
\cite{TTB_Dense, stop1, TTB_Software}
on
an Intel Xeon E5-2650 Ivy Bridge 2.0 GHz machine with 32 GB of memory.   
The CP-ALS implementation we compare
against incorporates the recent optimizations of Phan et
al.~\cite{phan}, without which it would be several times slower. 

\subsection{Computational Time}
\label{sec:computational-time}
Our first experiments ignore the convergence of the randomized methods
and compare only the computational time for each iteration.
Although the randomized methods will typically require more iterations, these
experiments enable us to understand the difference in costs for the
least squares solves and the initialization.
We consider convergence and solution quality
in subsequent subsections. 

\Cref{fig:periterspeedup} shows how much cheaper each iteration of the
ALS algorithm is when using the randomized least squares solvers.
We consider third- and fifth-order tensors of various sizes, where the dimensions of all modes are the same.
We used a target rank $R=5$ in all experiments.
We sampled $S=90$ rows for the randomized methods, although the exact
number of rows makes little difference in runtime.
For each size,
we compute the mean time for 100 iterations over three tensors. 
The convergence checks are entirely omitted in the
computation and do not contribute to the timings. 
Since we do a fixed number of iterations, the initial guesses are
irrelevant.
We see that as the size increases, the relative speedup of the randomized algorithms also increases to as much as $50\times$ for order-3 tensors over $500\times$ for order-5 tensors. 
This is mainly due to the per-iteration computational cost of the randomized algorithms being $\inlineBigO{NRIS}$, where $N$ is the order of the tensor and $I$ is the size of each dimension, as derived in \cref{sec:rcpals-cost}.
For comparison, the cost of CP-ALS is $\inlineBigO{NRI^N}$ flops per iteration.

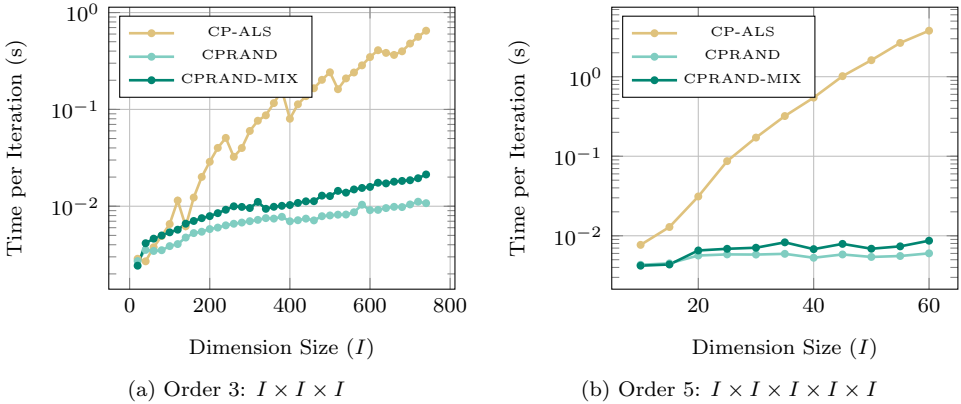
\begin{figure}[th]
  \centering 
  \subfloat[Order 3: $I \times I \times I$]{
    \begin{tikzpicture}
      \begin{semilogyaxis}[width=.475\textwidth, grid=major,
        xlabel={Dimension Size ($I$)}, 
        ylabel={Time per Iteration (s)}, 
        legend style={font=\tiny},
        legend entries={CP-ALS,CPRAND,CPRAND-MIX}, 
        legend pos=north west,
        ]
        \addplot[color=cpalsrcolor] table [x=dimcol,y=cptimes] {./tpi3-22-Dec-2016.dat};
        \addplot[color=cprandcolor] table [x=dimcol,y=cprandtimes] {./tpi3-22-Dec-2016.dat};
        \addplot[color=cprandfcolor] table [x=dimcol,y=cprandffttimes] {./tpi3-22-Dec-2016.dat};
      \end{semilogyaxis}
    \end{tikzpicture}
    \label{fig:periter3}} 
  \hfill
  \subfloat[Order 5: $I \times I \times I \times I \times I$]{
    \begin{tikzpicture}
      \begin{semilogyaxis}[width=.475\textwidth, grid=major,
        xlabel={Dimension Size ($I$)}, 
        ylabel={Time per Iteration (s)}, 
        legend style={font=\tiny},
        legend entries={CP-ALS,CPRAND,CPRAND-MIX}, 
        legend pos=north west,
        ]
        \addplot[color=cpalsrcolor] table [x=dimcol,y=cptimes] {./tpi5-22-Dec-2016.dat};
        \addplot[color=cprandcolor,mark=.] table [x=dimcol,y=cprandtimes] {./tpi5-22-Dec-2016.dat};
        \addplot[color=cprandfcolor,mark=.] table [x=dimcol,y=cprandffttimes] {./tpi5-22-Dec-2016.dat};
      \end{semilogyaxis}
    \end{tikzpicture}
    \label{fig:periter5}} 
  \caption{Mean time per iteration of CP-ALS, CPRAND and CPRAND-MIX
    for 3rd- and 5th-order tensors. 
    The target rank is $R=5$.
    The randomized methods use $S=90$ sampled rows.
    Each dot represents the mean iteration time for three different tensors
    over 100 iterations (no checks for convergence).
  }  
  \label{fig:periterspeedup}
\end{figure}

\cref{fig:estfitspeedup} demonstrates how much faster it is
to compute the stopping criterion based on the sampling approach described in \cref{sec:stopping}.
In this experiment, we compute the model fit both exactly and using $\hat P=2^{14}$ samples for order-3 and order-5 tensors.
The tensors and models are generated synthetically, with a prescribed fit of $95\%$.
For these problems, the largest relative error between the true and
sampled fit is less than $10^{-3}$, which is more than enough accuracy to make the correct decision on when to stop the iteration.
For both order-3 and order-5 tensors, we see speedups of over two orders of magnitude for the largest problems, and we can expect larger speedups for larger problems because the number of flops required by the sampling method is independent of the tensor dimensions.

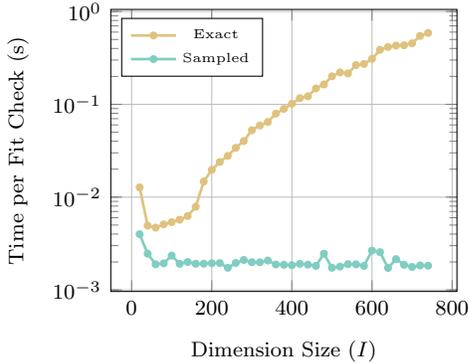
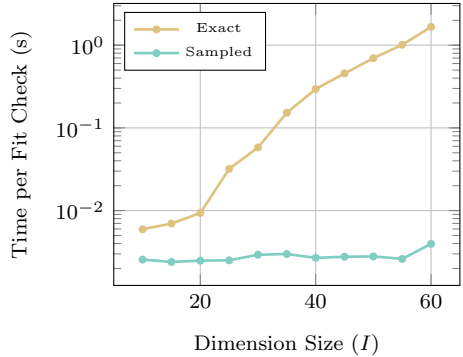
\begin{figure}[t]
  \centering 
  \subfloat[Order 3: $I \times I \times I$]{
    \begin{tikzpicture}
      \begin{semilogyaxis}[width=.475\textwidth, grid=major,
        xlabel={Dimension Size ($I$)}, 
        ylabel={Time per Fit Check (s)}, 
        legend style={font=\tiny},
        legend entries={Exact, Sampled}, 
        legend pos=north west,
        ]
        \addplot[color=cpalsrcolor] table [x=dimcol,y=cptimes] {./tpfc3-05-Jan-2017.dat};
        \addplot[color=cprandcolor,mark=.] table [x=dimcol,y=cprandtimes] {./tpfc3-05-Jan-2017.dat};
      \end{semilogyaxis}
    \end{tikzpicture}
    \label{fig:estfits3}} 
  \hfill
  \subfloat[Order 5: $I \times I \times I \times I \times I$]{
    \begin{tikzpicture}
      \begin{semilogyaxis}[width=.475\textwidth, grid=major,
        xlabel={Dimension Size ($I$)}, 
        ylabel={Time per Fit Check (s)}, 
        legend style={font=\tiny},
        legend entries={Exact, Sampled}, 
        legend pos=north west,
        ]
        \addplot[color=cpalsrcolor] table [x=dimcol,y=cptimes] {./tpfc5-05-Jan-2017.dat};
        \addplot[color=cprandcolor,mark=.] table [x=dimcol,y=cprandtimes] {./tpfc5-05-Jan-2017.dat};
      \end{semilogyaxis}
    \end{tikzpicture}
    \label{fig:estfits5}} 
  \caption{Termination criterion (fit check) time of exact (for CP-ALS) and sampled (for CPRAND and CPRAND-MIX) for 3rd- and 5th-order tensors. 
    The target rank is $R=5$.
    The estimate of the fit is computed using $\hat P=2^{14}$ sampled entries.
    Each dot is the mean time to compute fit over 10 trials.
  }  
  \label{fig:estfitspeedup}
\end{figure}

We also consider the initialization costs of the methods.
For CPRAND-MIX, we need to apply an FFT to the fibers of the tensor in each mode.
If we use the HOSVD initialization for CP-ALS, then we need to
consider its cost. We note that we do not actually compute the full
HOSVD, but rather the leading left singular vectors of the unfolded
tensor $\M{X}_{(n)}$ for $n=2,\dots,N$.
\Cref{fig:mixtime} shows the preprocessing time for third- and
fifth-order tensors of various sizes, again where all modes have the same dimension.
The target rank (needed for HOSVD) is $R=5$.
Each data point is the mean time over 3 trials of 100 iterations each for the given size.
The cost of preprocessing for CPRAND-MIX is $\inlineBigO{NI^N\log I}$, as derived in \cref{sec:rcpalsfft-cost}, while the cost of HOSVD to initialize CP-ALS(H) is $\inlineBigO{NI^{N+1}}$ (we use an iterative eigenvector solver on the Gram matrix of each mode). 
While the cost of HOSVD is generally larger than mixing the tensor
with FFTs, the data access patterns of the methods are similar, and we
observe very similar timing results.
We mention that CP-ALS requires a good starting point, like HOSVD, to
ensure good performance. However, the randomized methods gain no
advantage from using the HOSVD initial guess so they use a random
initialization. 

\begin{figure}[th]
  \centering 
  \subfloat[Order 3: $I \times I \times I$]{
    \begin{tikzpicture}
      \begin{semilogyaxis}[width=.475\textwidth, grid=major,
        xlabel={Dimension Size ($I$)}, 
        ylabel={Preprocessing Time (s)}, 
        legend style={font=\tiny},legend entries={HOSVD,MIX}, legend pos=south east,
        ]
        \addplot[color=cpalsncolor,mark=.] table [x=Var1,y=Var2] {./3dmixtimesfftvsnvecs.dat};
        \addplot[color=cprandcolor,mark=.] table [x=Var1,y=Var3] {./3dmixtimesfftvsnvecs.dat};
      \end{semilogyaxis}
    \end{tikzpicture}
    \label{fig:mixtime3d}} 
  \hfill
  \subfloat[Order 5: $I \times I \times I \times I \times I$]{
    \begin{tikzpicture}
      \begin{semilogyaxis}[width=.475\textwidth, grid=major,
        xlabel={Dimension Size ($I$)}, 
        ylabel={Preprocessing Time (s)}, 
        legend style={font=\tiny},
        legend style={font=\tiny},legend entries={HOSVD,MIX}, legend pos=south east,
        ]
        \addplot[color=cpalsncolor,mark=.] table [x=Var1,y=Var2] {./5dmixtimesfftvsnvecs.dat};
        \addplot[color=cprandcolor,mark=.] table [x=Var1,y=Var3] {./5dmixtimesfftvsnvecs.dat};
      \end{semilogyaxis}
    \end{tikzpicture}
    \label{fig:mixtime5d}} 
  \caption{Initialization time comparison between MIX (for CPRAND) and
    HOSVD (for CP-ALS) for 3rd- and 5th-order tensors. 
    The target rank is $R=5$ for HOSVD. Each dot represents the mean of
    3 trials.}
  \label{fig:mixtime}
\end{figure}
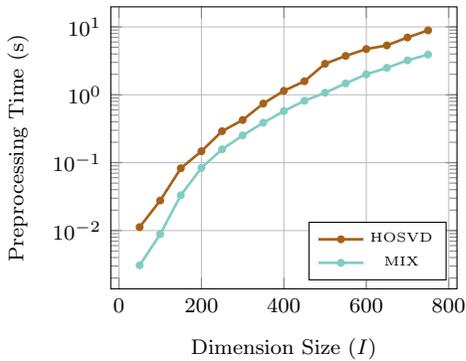
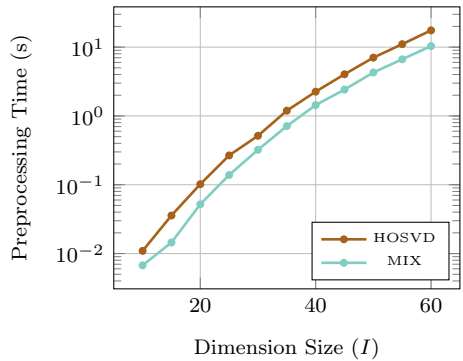

\subsection{Synthetic Data} \label{sec:datagen}
For our experiments on synthetic tensors, we use various
generation parameters. 
We create tensors based on known randomly-generated weight vectors
($\bm{\lambda}$) and factor matrices ($\set{ \Mn{A}{n} }$). In this
way, we know the true solution.
We consider 3rd and 4th-order problems, i.e., $N \in \set{3,4}$.
In the 3rd-order case, we set the size to be $400 \times 400 \times
400$, and in the 4th-order case we set the size to be $90 \times 90
\times 90 \times 90$.
For all experiments, we set the rank to be $R_{\text{true}}=5$.
The weight vector $\bm{\lambda} \in \mathbb{R}^{R_{\text{true}}}$ has
entries drawn uniformly from $[0.2,0.8]$.
The factor matrices $\Mn{A}{n} \in \mathbb{R}^{I_n \times R_{\text{true}}}$ are randomly generated as described in
\cite{ToBr06} so that the columns have collinearity $C$, which means
that any two column vectors from the same factor matrix satisfy
\begin{displaymath}
C = \frac{\V{a}_r^{(n)\trans}\V{a}_s^{(n)}}{\| \V{a}_r^{(n)\trans} \| \| \V{a}_s^{(n)}\|}.
\end{displaymath}
Intuitively, higher collinearity corresponds to greater overlap
between factors, while geometrically it corresponds to smaller angles
between factor vectors. High collinearity makes the original factors
harder to recover using CP-ALS, and can introduce swamping
behavior~\cite{swamps1}. In our experiments we generate tensors with
$C \in \set{0.5,0.9}$.
Using the synthetic weight vectors ($\bm{\lambda}$) and factor
matrices ($\set{ \Mn{A}{n} }$), the tensor we are trying to recover is
\begin{displaymath}
    \T{X}_{\text{true}} = 
  \sum_{r=1}^{R_\text{true}} 
  \lambda_r \;
  \V{a}_r^{(1)} \circ \V{a}_r^{(2)} \circ \cdots \circ \V{a}_r^{(N)}.
\end{displaymath}
Finally, we add noise to obtain the observed tensor. 
Let $\T{N} \in \mathbb{R}^{I_1\times I_2 \times \dots \times I_N}$ be
a noise tensor with entries drawn from a standard normal
distribution. Then our observed tensor is
\begin{equation*}
  \label{eqn:synth}
  \T{X} =  \T{X}_{\text{true}} + 
  \eta\left(\frac{\|\T{X}_{\text{true}}\|}{\|\T{N}\|}\right) \T{N},
\end{equation*}
where the parameter $\eta \in \set{0.01,0.10}$ is the amount of noise.
Generally, the rank is unknown, so 
we run the algorithms with $R \in \set{R_{\text{true}},
R_{\text{true}}+1}$.
The parameters for the experiments are summarized in \cref{tab:parms}.

\begin{table}[htbp!]
\smaller
  \centering
  \caption{Parameters varied in synthetic experiments.}
	\begin{tabular}{|c c|}
	    \hline
	    \textbf{Parameter} & \textbf{Values} \\ \hline
	    Order \& Size ($N, I$) & $\set{(3,400),(4,90)}$ \\ \hline
	    True \# Components ($R_{\text{true}}$) & $5$ \\ \hline
	    Collinearity ($C$) &  $\set{0.5,0.9}$  \\ \hline
	    Noise ($\eta$) &  $\set{0.01,0.1}$  \\ \hline
	    Model \# Components ($R$) & $\set{5,6}$ \\ \hline 
	  \end{tabular}
  \label{tab:parms}
\end{table}

We use the standard \emph{score} metric to measure how well the ground
truth is recovered by a CP decomposition in those cases where the true
factors are known \cite{ToBr06}. The score between two rank-one tensors
$\T{X}=\V{a}\circ\V{b}\circ\V{c}$ and
$\T{Y}=\V{p}\circ\V{q}\circ\V{r}$ is defined as:
\begin{equation}\label{eq:score}
\text{score}(\T{X},\T{Y}) = \frac{\V{a}^\trans\V{p}}{\|\V{a}\|\|\V{p}\|} \times \frac{\V{b}^\trans\V{q}}{\|\V{b}\|\|\V{q}\|} \times \frac{\V{c}^\trans\V{r}}{\|\V{c}\|\|\V{r}\|}.
\end{equation}
The $\bm{\lambda}$ values are ignored.
For $R>1$, we average the scores for all pairs of components.  Alas, the CP decomposition does not recover factors in their original order, so the score is the maximal average across all permutations of rank-one components.

We use the \emph{fit} to determine convergence, and the fit is defined as
\begin{displaymath}
  F = 1 - \frac{ \|\T{X} - \T{\tilde X} \|}{\|\T{X}\|}.
\end{displaymath}
For a tensor with $\eta$ noise, we expect the final fit to be at best
$1-\eta$.
In the presence of noise, maximizing fit does not exactly correspond to maximizing score.
The method
terminates when either the number of iterations exceeds 200, the
change in fit goes below $10^{-4}$ ($|F_{t} - F_{t-1}| \leq
10^{-4})$, or the fit is within $20\%$ of the noise level $(F_t \geq 1 -
1.2\eta)$.
All methods use the same criteria for termination with the exception
that CPRAND and CPRAND-MIX use an approximation to the fit, as discussed
in~\cref{sec:stopping}.  Specifically, these methods compute the error at $\hat
P$ entries and use that to estimate the overall fit.
We use $\hat P=2^{14}$ in these experiments, and 
we stress that the same $\hat P$ entries are used across all iterations.

As mentioned previously, we use the CP-ALS method provided by the
Tensor Toolbox for MATLAB. 
For CPRAND and CPRAND-MIX, the number of rows sampled for each least
squares solve is $S=80$ for
$R=5$ and $S=108$ for $R=6$. 
Here we stress that we make a new random
selection of rows at each iteration.

\newcommand{\boxplotcaption}[1]{%
  Results on 200 synthetic tensors of
  size #1 with $R_{\text{true}}=5$ and collinearity
  $C \in \set{0.5,0.9}$ in the factors. 
  We stop when the number of iterations exceeds 200, 
  the fit stagnates $|F_t - F_{t-1}| \leq 10^{-4}$, or
  the fit exceeds a preset threshold $F_t \geq 1-1.2\eta$.
  Each of the four methods is tested 
  with target ranks $R \in \set{5,6}$
  and three random starts, except for CP-ALS (H) which as the one
  fixed start. We report results from all starts (2000 in all).
  For CPRAND and CPRAND-MIX,
  we use $\hat P = 2^{14}$ random entries to check convergence and
  the number of row samples in the least squares solve is
  $S=80$  for $R=5$ and $S=108$ for $R=6$.}

\begin{figure}[tbhp]
  \centering 
  \subfloat[Noise $\eta = 1\%$]{
    \centering 
    \includegraphics[width=0.95\linewidth]{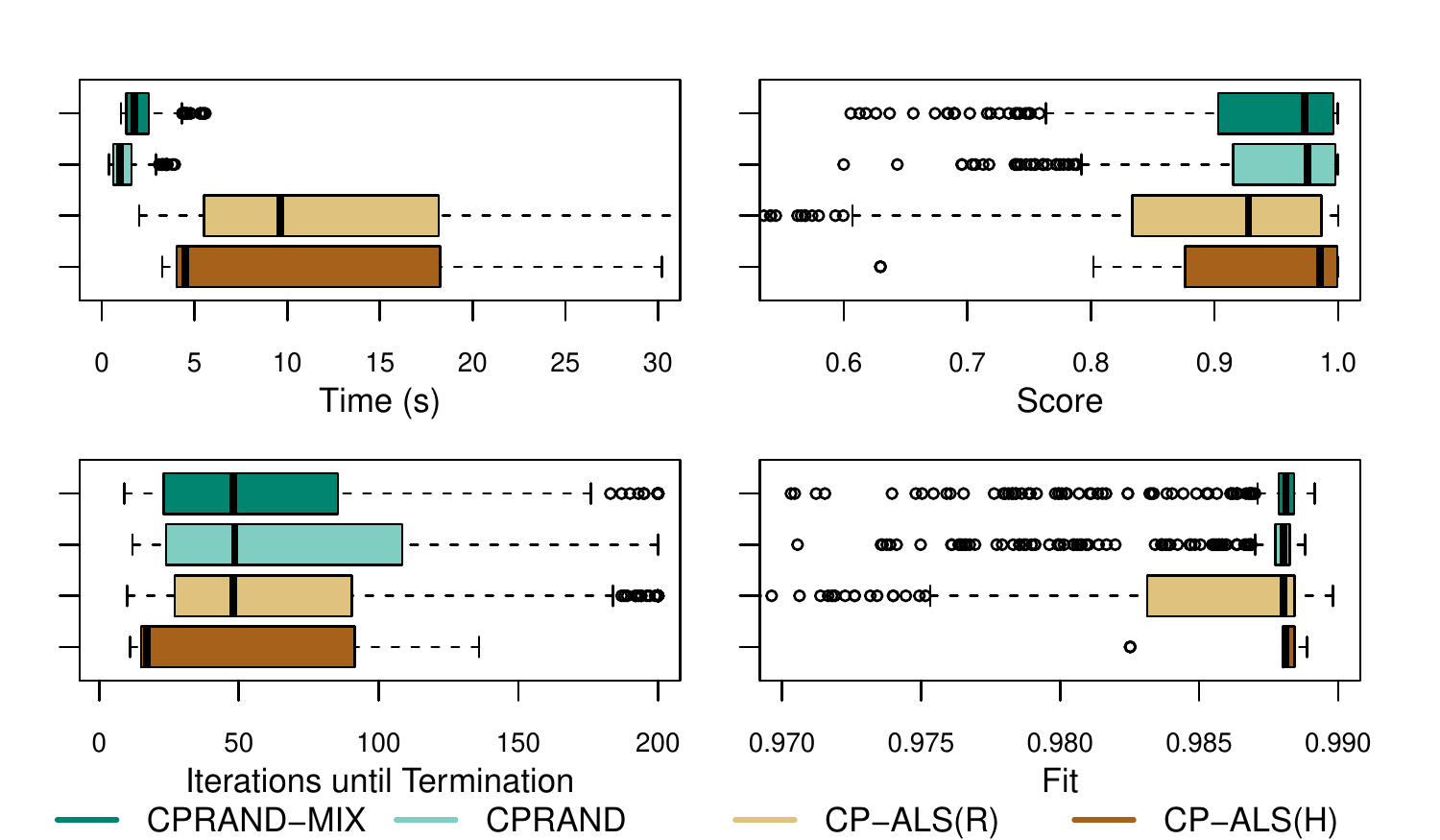}
    \label{fig:400plotNS01R5}
  }\\ \vspace{-0.2in}
  \subfloat[Noise $\eta = 10\%$]{
    \centering 
    \includegraphics[width=0.95\linewidth]{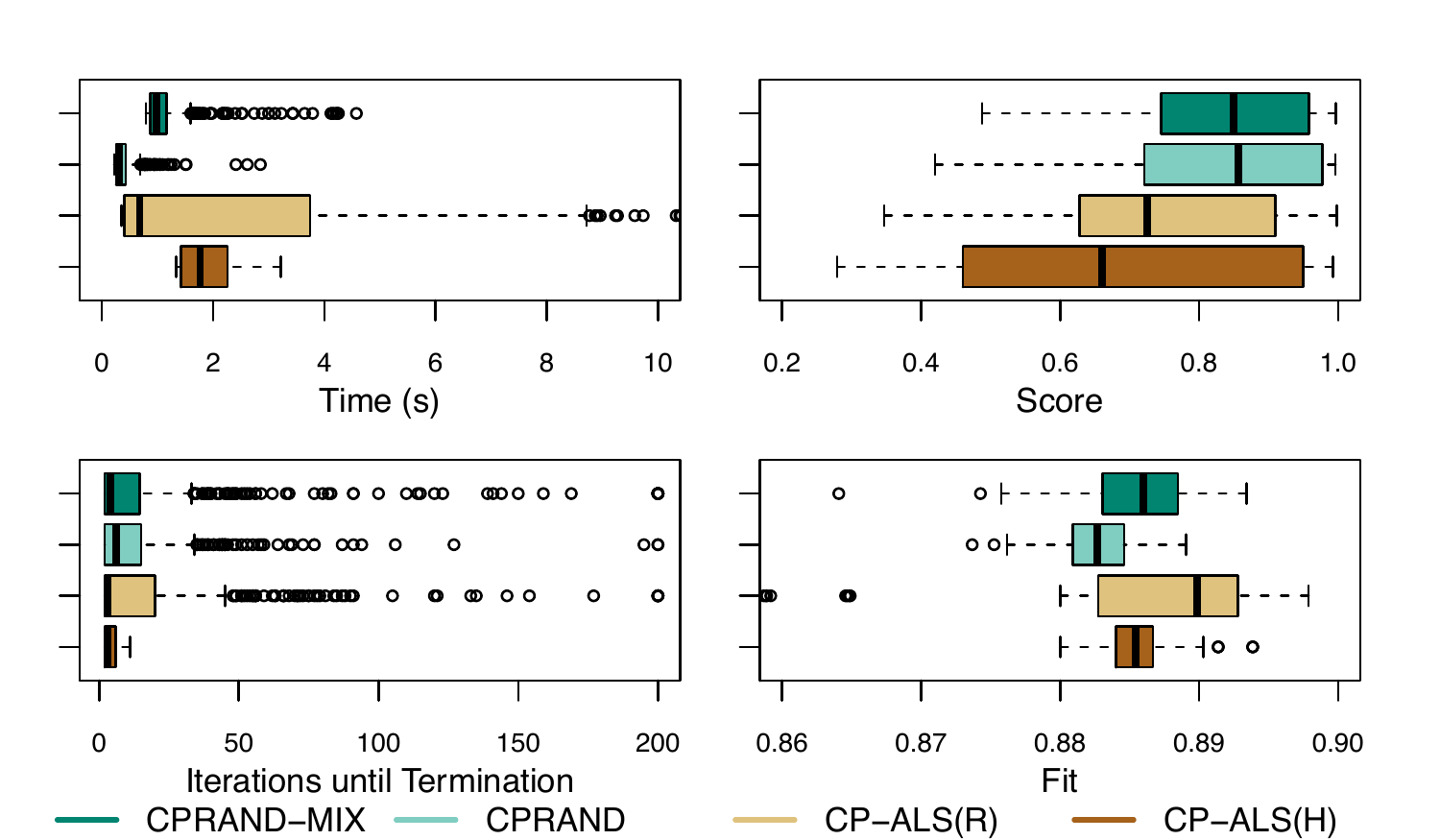}
    \label{fig:400plotNS10R5}
  }
  \caption{\boxplotcaption{$400\times400\times400$}}
  \label{fig:syn3}
\end{figure}

\begin{figure}[tbhp]
  \centering 
  \subfloat[Noise $\eta = 1\%$]{
    \centering 
    \includegraphics[width=0.95\linewidth]{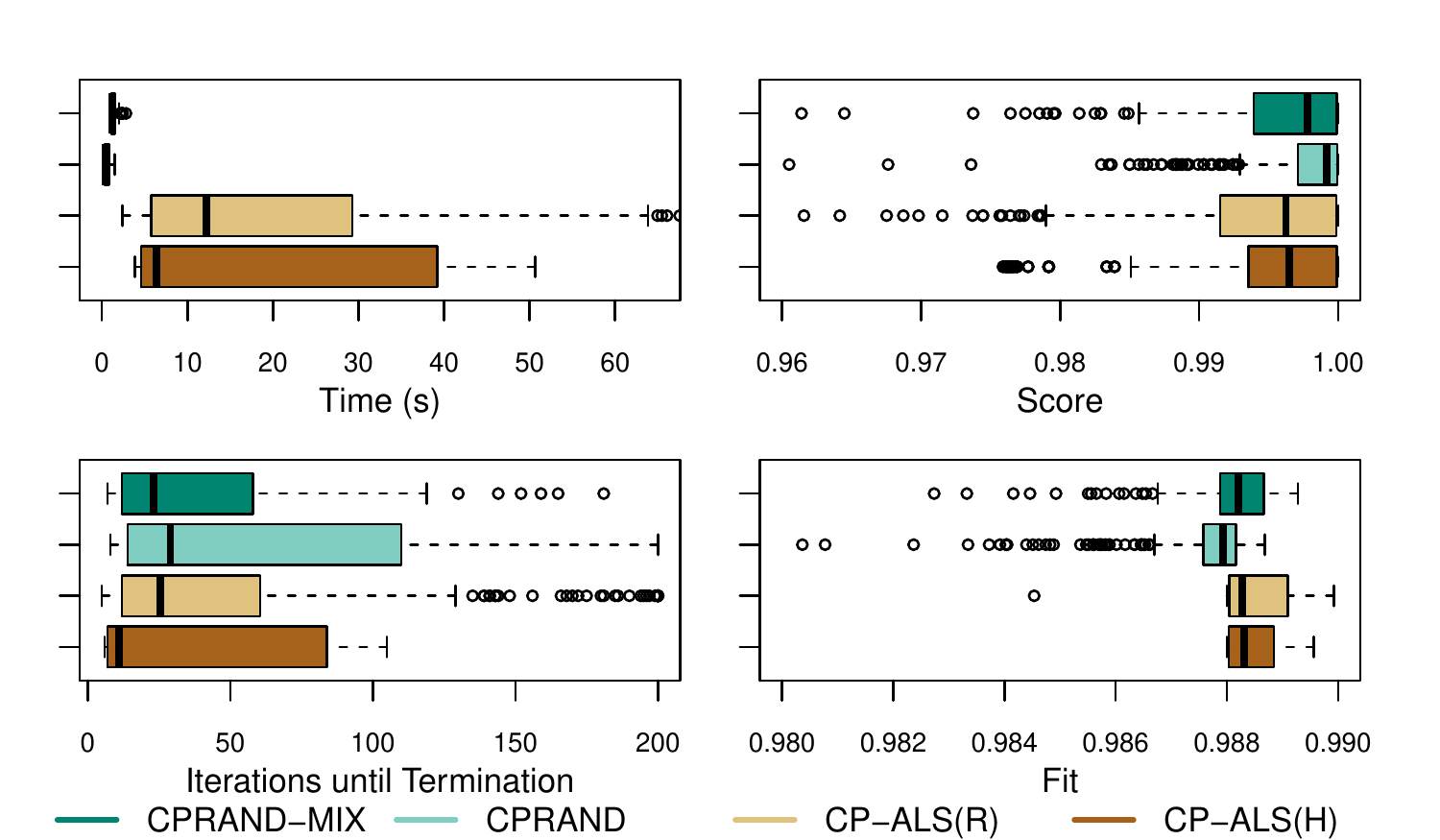}
    \label{fig:90plotNS01}
  }\\ \vspace{-0.2in}
  \subfloat[Noise $\eta = 10\%$]{
    \centering 
    \includegraphics[width=0.95\linewidth]{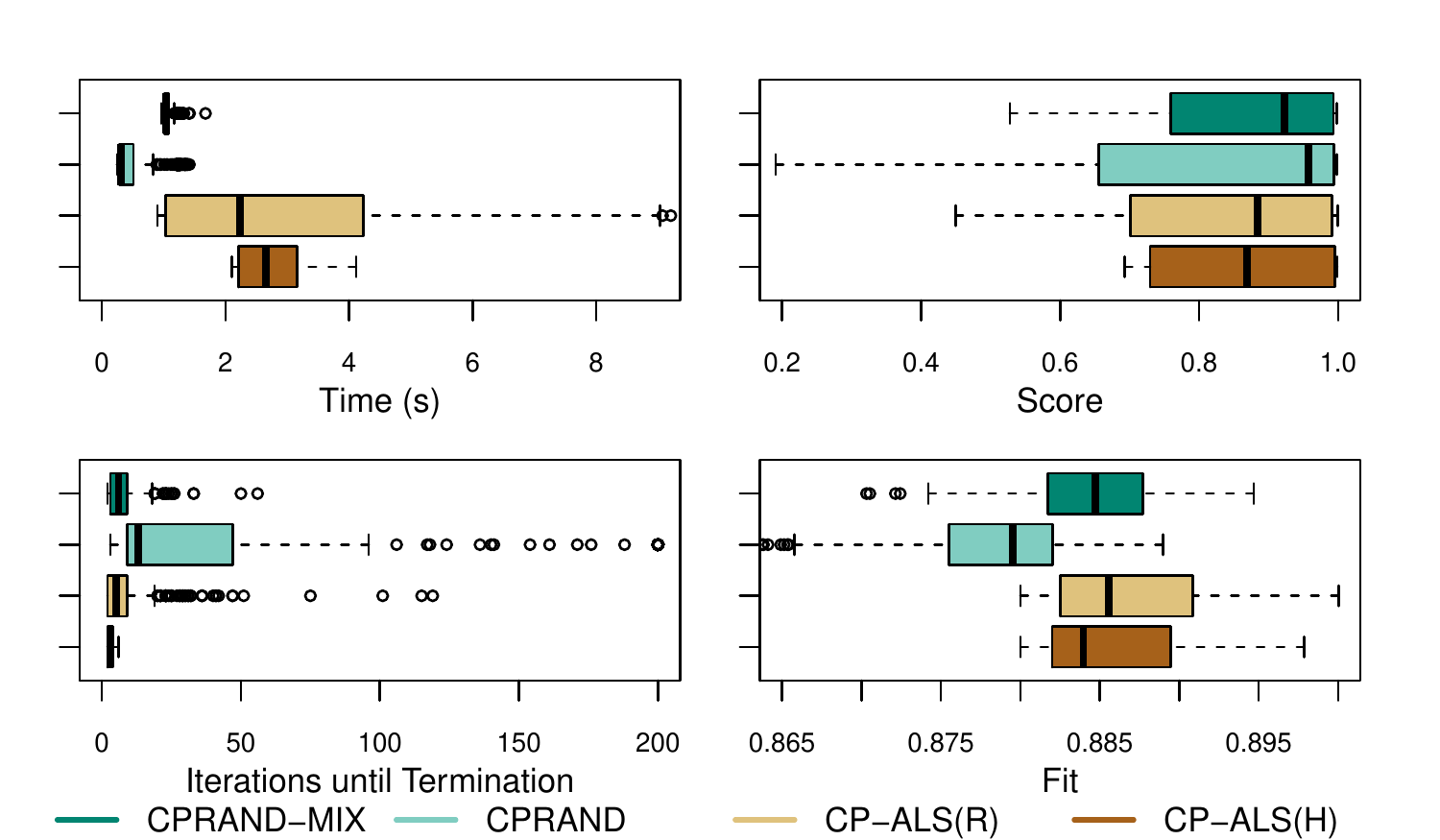}
    \label{fig:90plotNS10}
  }
  \caption{\boxplotcaption{$90\times90\times90\times90$}}
  \label{fig:syn4}
\end{figure}

For each possible combination of tensor order/size, collinearity, and
noise level (listed in~\cref{tab:parms}), we generate 50 synthetic
tensors. 
In \cref{fig:syn3,fig:syn4},
we show box plots comparing CPRAND and CPRAND-MIX with random initialization versus
CP-ALS with HOSVD (H) and random (R) initialization. 
The box plots show a box that indicates the 25th-75th quartiles, and
the median is indicated by a vertical line inside the box. Outliers
are displayed as circles.
Each subfigure shows the results on 100 distinct tensors, i.e., fixed
order/size and noise level with 50 tensors each for $C \in \set{0.5,0.9}$.
We test each method with $R \in \set{5,6}$.
For random initialization, we have three starting points. 
Therefore, each row in the box plot is the result of 600 runs for
CPRAND, CPRAND-MIX, and CP-ALS~(R) and 200 runs for CP-ALS~(H).
For each trial we measure the time, number of outer iterations, fit,
and score at termination.  

First, we consider the quality of the solutions in terms of the fit
(always in the range $[0,1]$),
i.e., the objective function being maximized.
This is shown in the lower right plot of each subfigure, and we report
the true final fits even if approximate fits are used in the algorithm.
The median fits are essentially
identical, with a maximum difference of 0.006. The CP-ALS (R) has the
highest variance in the fit since it is highly dependent on the
quality of the starting point. The CPRAND and CPRAND-MIX have less
variance in their fits.
In general, the CPRAND methods are much less sensitive to starting
point --- so much so that we do not even include results using the
HOSVD initialization.  

Second, because these problems are synthetic, we know the true
underlying factors and so can use the score from \cref{eq:score}, 
with 1.00 being a perfect match.
The scores are shown in the upper right plot of each subfigure.
We note that we see little difference between CPRAND and
CPRAND-MIX in terms of quality because these artificially generated
problems do not have high coherence.
At low noise (1\%), the methods all do well, with medians above 0.97
for CPRAND, CPRAND-MIX, and CP-ALS(H) in
\cref{fig:400plotNS01R5} and above 0.99 for all methods in
\cref{fig:90plotNS01}.
We see a striking difference, however, at the 10\% noise level. The
medians for CPRAND and CPRAND-MIX are above 0.85, whereas CP-ALS are below
0.73 in \cref{fig:400plotNS10R5}.
Similarly, 
medians for CPRAND and CPRAND-MIX are above 0.92, whereas CP-ALS are below
0.89 in \cref{fig:90plotNS10}.
We contend that the randomized methods are more robust 
because they avoid the problem of overfitting to noise thanks
to the randomization.
This is evidenced by the fact that all methods achieve similar fit but the randomized methods tend to achieve better score.

Third, we look at the number of iterations, shown in the lower left
plot of each subfigure.
The median number of iterations for the randomized methods are always
higher than CP-ALS (H), since it has the advantage of a good starting
point.
This is because the randomized methods generally make less progress
per iteration.
With the exception of CP-ALS (H), every method hit the maximum number
of iterations (200) at least once.

Fourth, we consider runtime, where we expect to get the most benefit.
This are shown in the upper left plot of each subfigure.
In the low noise (1\%) case, we see an improvement in median runtime of
10X for CPRAND versus CP-ALS (R) in \cref{fig:400plotNS01R5}
and 30X for the same pair in \cref{fig:90plotNS01}.
The cost of the preprocessing for CPRAND-FFT means it may be about 3X
slower than CPRAND without mixing, depending on the number of
iterations.
In the high noise (10\%) case, the difference in time is less
dramatic, but we have the improvement in scores discussed above.

In summary, the synthetic results suggest that the CPRAND and
CPRAND-MIX methods produce solutions that are at least as good and
sometimes much better than 
CP-ALS in terms of quality (fit and score).
Moreover, the randomized algorithms are at least as fast as the
standard methods and sometimes much faster.

\subsection{COIL Data Set}
\label{sec:coil-data-set}
COIL-100 is an image-recognition data set that contains images of
objects in different poses~\cite{coil100} and has been used previously
by Zhou, Cichocki, and Xie~\cite{bigtens} as a tensor decomposition
benchmark.
The problem is set up as follows.
There are 100 different object classes, each of which is imaged from
72 different angles. Each image is sized to $128 \times 128$ pixels in
three color channels (RGB). If we discard the ground truth, 
we have a $128 \times 128
\times 3 \times 7200$ tensor of size 2.8GB. The
irregular dimensions and large size of the data make for an
interesting CP benchmark. 

In our experiment, we compare the runtimes of CP-ALS (R) and
CPRAND-MIX. Unlike the synthetic experiments, this experiment required mixing to converge for a reasonable number of samples.
We use $R=20$ factors. 
We ran five trials of CP-ALS and terminated when the change in fit
went below $10^{-4}$. This yielded a median runtime of 204 seconds and
a fit of $0.686$.
For CPRAND-MIX, we terminate when the fit fails to improve for five
consecutive iterations and 
vary the number of samples ($\samplesize$).
For each $\samplesize$, we run five trials with random starting points.
We compute the approximate fit for CPRAND-MIX with sample size $\hat
P=2^{14}$.
We vary $S$ and show the results in \cref{tab:coil}. The fits are very
close to the fit obtained by CP-ALS, with speedups as high as
8$\times$.
The speedup does not decrease monotonically with $S$, since
differences in the number of iterations to converge may have some
impact. Nevertheless, increasing the number of samples increases
the cost per iteration and thus reduces the overall speedup on average.

\begin{table}[t]
  \centering\footnotesize
  \sisetup{round-mode=places}
  \caption{CPRAND-MIX speedup and accuracy on COIL tensor of size $128 \times 128
    \times 3 \times 7200$. Reporting median runtimes over five trials with
    random starting points, $R=20$ components, $\hat P = 2^{14}$ entries
    for approximate fit, and varying number of samples $S$. Speedup
    compared against median runtime of CP-ALS over five trials with random starting points.}
  \label{tab:coil}
  \begin{tabular}{|S[table-format=4.0]|S[round-precision=2,table-format=.2]|S[round-precision=3,table-format=.3]|}
    \hline
    \multicolumn{1}{|c|}{\bf \# Samples ($S$)} &
    \multicolumn{1}{c|}{\bf Speedup} &
    \multicolumn{1}{c|}{\bf Fit} \\ 
    \hline
    400 & 8.37821245976872 & 0.674427373291553 \\ 
    450 & 7.97986205183953 & 0.676401067180041 \\ 
    500 & 6.63260244165385 & 0.677139575576137 \\ 
    550 & 7.28819766345918 & 0.677671827719428 \\ 
    600 & 4.75391554510243 & 0.680004470176377 \\ 
    650 & 4.72795654513824 & 0.679898697712541 \\ 
    700 & 4.77492964294695 & 0.680399654058832 \\ 
    750 & 4.52011688273789 & 0.680905539528707 \\ 
    800 & 3.70307615304456 & 0.681763846278681 \\ 
    850 & 4.89505864379786 & 0.678200332500891 \\ 
    900 & 4.94563231318697 & 0.679038846110242 \\ 
    950 & 4.22420843976262 & 0.682181519741705 \\ 
    1000 & 2.8424227150739 & 0.683819845468544 \\     
    \hline
    \multicolumn{1}{|c|}{CP-ALS} & 1.00 & 0.686 \\
    \hline
  \end{tabular}
\end{table}

\subsection{Hazardous Gases}
\label{sec:hazardous-gases}
Vervliet and De Lathauwer~\cite{VeLa16} demonstrate their randomized
block sampling approach for CP on a hazardous gas classification
task~\cite{gas}, so we
compare on the same dataset.
The data comes from 899 experiments (actually 900 experiments, but one
is omitted) where one of three different hazardous gases (carbon monoxide, acetaldehyde, or ammonia) is released into
a wind tunnel and its concentration is measured across 72 sensors for
25,900 time steps. 
We use the exact same preprocessing script as Vervliet and De
Lathauwer~\cite{VeLa16}: missing values are interpolated and the 
data is normalized, cropped, and centered. The resulting tensor of 
size $25{,}900\times72\times899$ requires 13.4~GB storage. 
The first mode corresponds to the 25,900 time steps,
the second mode corresponds to the 72 sensors, and
the third mode corresponds to the 899 experiments.
We compute the
CP decomposition with $R=5$ (as in \cite{VeLa16}).
We run CP-ALS with both random (R) and HOSVD (H) initialization.
Due to the size of the tensor, we run CPRAND without mixing, using
random initialization, a
sample size of $\samplesize=1000$ rows per least squares solve, and
using $\hat P=2^{14}$ entries for the stopping condition.
The ALS methods terminate when
change in fit goes below $10^{-4}$ (i.e., $|F_{t} - F_{t-1}| \leq
10^{-4})$, and CPRAND terminates when estimated fit fails to improve after ten iterations. 
We run 10 trials each of CPRAND and CP-ALS(R), and a single trial of CP-ALS(H).
The median run times are listed in \cref{tab:gases}, and we can see that the median time for
CPRAND is less than one minute.
CPRAND was nearly 4$\times$ faster than CP-ALS(R) and achieves roughly the same classification error.
CPRAND is 10$\times$
faster than CP-ALS(H), which incurs a high initialization cost. 
We cannot compare runtimes with Vervliet and De~Lathauwer~\cite{VeLa16} since they are on a different
computational architecture,
but they report a runtime of less than three minutes for their method, which is in the same ballpark.

We next consider the quality of the decomposition.  Vervliet and De
Lathauwer manually selected three factors (column vectors) from the
experiment factor matrix and used those to classify the experiments
according to which of the three gases was released. We do a similar
experiment, except rather than choosing the three vectors manually, we
tried all ten (five choose three) possible choices of three vectors
and report on the best one.
The rows of this sub-factor matrix can then be thought of as
three-dimensional points. We run $k$-means on these points and measure
the classification error, which is the percentage of the 899
experiments that are misclassified. For each trial we performed a single run of
$k$-means with random initialization.
The median fits and classification errors over all trials are shown in \cref{tab:gases}. Both ALS methods achieve a median
classification error of 0.67\% (6 misclassified), while CPRAND achieves a marginally better classification error of 0.61\% (5.5 misclassified) despite a slightly lower fit value. For comparison, Vervliet and De Lathauwer~\cite{VeLa16}
report classification errors of 0.3--0.8\% for 100 runs of their randomized block
sampling approach, with the addition of a specialized step criteria; 
without the specialized step, their performance degrades to 5\%
error. 

\begin{table}[t]
\smaller
  \centering
  \caption{Results over ten trials on a $25{,}900 \times 72 \times 899$ tensor
    representing experiments with three hazardous gases.
    We use $R=5$ factors, and stop when the fit
    stagnates, i.e., $|F_{t} - F_{t-1}| \leq  10^{-4}$ for CP-ALS, or when estimated fit fails to improve after 10 iterations of CPRAND.
    For CPRAND, we use $\hat P = 2^{14}$ random entries to check
    convergence and $S=1000$ row samples in each least squares
    solve. 
    We run $k$-means on three columns of the experiment factor matrix
    with a target of three clusters. Using the $k$-means output we report the
    median proportion of 899 experiments that are misclassified (according to which
    hazardous gas was used).}
	\begin{tabular}{|l c c c|}
	    \hline
	    \textbf{Method} & \textbf{Median Time} (s) & \textbf{Median Fit} & \textbf{Median Classification Error} \\ \hline
	    CPRAND & $53.6$ & $0.715$ & $0.61\%$ \\ \hline
	    CP-ALS (H) & $578.4$ & $0.724$ &  $0.67\%$ \\ \hline
	    CP-ALS (R) & $204.7$ & $0.724$ & $0.67\%$ \\ \hline
	  \end{tabular}
  \label{tab:gases}
\end{table}

We visualize the factors computed by CPRAND in \cref{fig:gases}. It is
easy to see that the results can be used to classify the gases. For
instance, the first factor clearly separates the purple gas from the
green and red. Similarly, the fourth factor clearly separates red from
the green and purple.
The fifth factor is the smallest
magnitude and appears to be a ``noise'' factor.

\begin{figure}[ht]
  \centering
  \includegraphics[width=0.98\textwidth]{./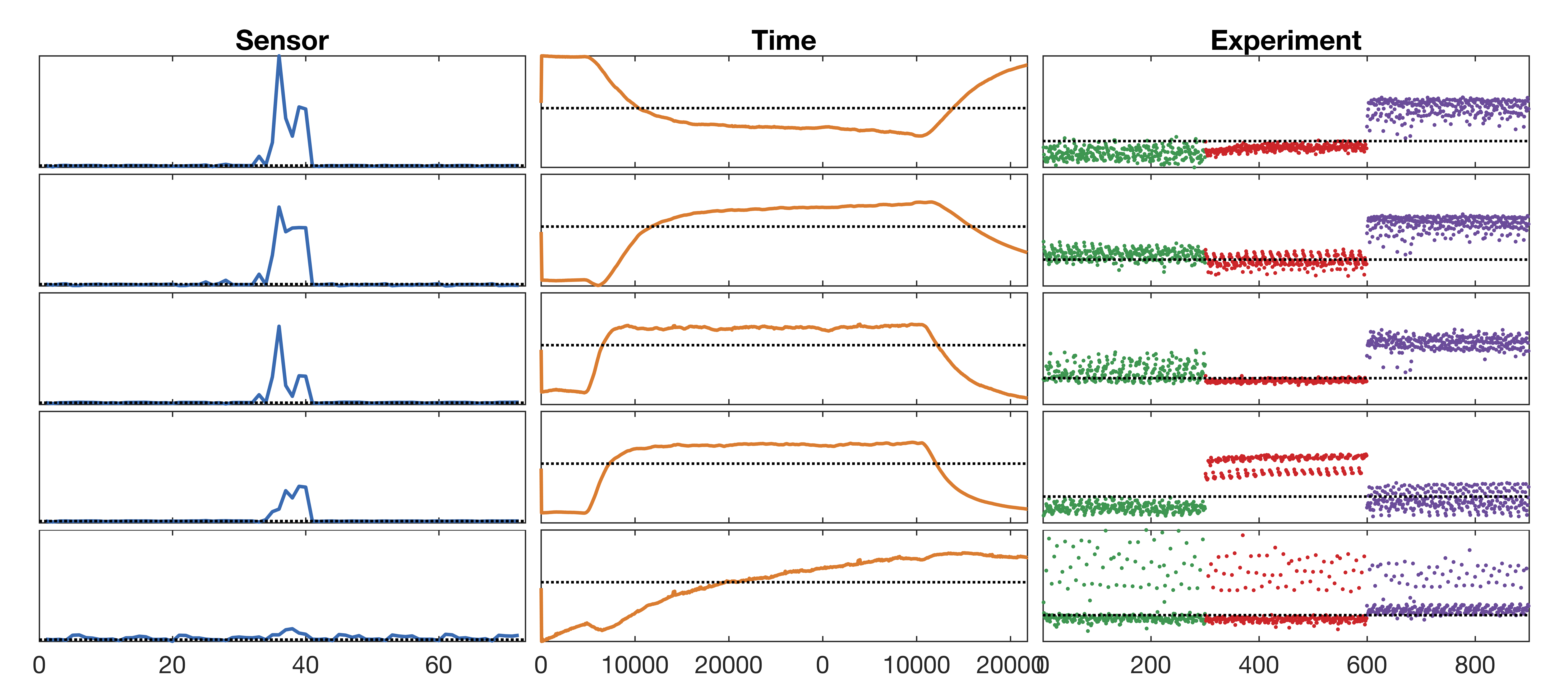}
  \caption{Visualization of the five factors from the $25900 \times
    72 \times 899$ hazardous gas tensor, as computed by CRAND. The
    factors are sorted by magnitude, from largest at the top to
    smallest at the bottom. The magnitude is reflected in the sensor
    factors (left). The time (middle) and experiment (right) factors
    are normalized to unit norm. The three gas types are color-coded
    in the experiments factors. All factors in the same mode are
    plotted on the same y-scale, and the dashed line is zero.}
  \label{fig:gases}
\end{figure}

\section{Related Work} \label{sec:related} 
Vervliet and Lauthauwer present a stochastic gradient descent (SGD) algorithm for CP that samples blocks from the original tensor to update corresponding blocks of the factor matrices~\cite{VeLa16}. This approach is similar in spirit to CPRAND, but takes an altogether different approach to the randomization. They use \emph{contiguous} samples in the block updates and finer control over step sizes. They also update only a portion of each factor matrix in each iteration.
An interesting contrast between the two methods can be seen by comparing the factors that are computed in the example from \cref{sec:hazardous-gases}.  Our factors are shown in \cref{fig:gases}. The time factors are much smoother than
the factors pictured in~\cite{VeLa16}. Since the block method 
updates only a small subset of each factor at a time, we conjecture that this may explain the
blockiness of the solution. 
Vervliet and Lauthauwer~\cite{VeLa16} also propose an inexpensive stopping condition based on Cramer-Rao bounds, but these require some additional knowledge about the noise level.
Another framework that draws from SGD is FlexiFaCT, which targets coupled tensor decompositions for parallel computation~\cite{flexifact}.

Cheng et al.~have recently applied a leverage score-based sampling to the least-squares step of the sparse CP decomposition by showing how leverage scores of an unfolded tensor can be estimated by the leverage scores of the factor matrices~\cite{spals}. This approach is similar to the way that we bound the coherence of Khatri-Rao products. 
Reynolds et al.~also use randomization within CP-ALS, specifically for the case of rank reduction, where the input to the algorithm is already in CP format \cite{RDB16}.
They use randomization to improve the conditioning of the individual least squares problems in order to compute better overall approximations.
The randomization makes each iteration of their method more costly, but they observe faster convergence (and overall running time) than ALS for ill-conditioned problems.

Wang et al.~have applied sketching methods to \emph{orthogonal} tensors with provable guarantees~\cite{anand}. Song et al.~show that this sketch can be computed without reading the entire tensor (in sublinear time) under certain conditions~\cite{sublinear}.

An alternative to sketching is to compress the tensor using lossy methods before computation. Zhou and Cichocki examine the effectiveness of performing a CP decomposition on a compressed representation of the data using the lossy Tucker decomposition to produce a smaller problem size~\cite{bigtens}. ParCube~\cite{parcube} compresses the original tensor by directly sampling and performs a decomposition on the result.

\section{Conclusion} \label{sec:conclusions} 

We have provided an example of the power of randomized methods in the
context of CP decompositions for dense tensors. 
As discussed in the related work (\cref{sec:related}), a few
approaches have been recently proposed. Ours is a unique approach that
focuses on the least squares subproblem. The advantage of this
approach is that we can leverage existing theory and methodology.
Specifically, we have a practical implementation, using MATLAB and the
Tensor Toolbox, that efficiently employs randomized least squares in
the context of CP-ALS.

The least squares subproblems have a special structure that allows for
very efficient solution; however, this still requires formation of the
Khatri-Rao matrix and multiplication with the matricized tensor, which is the primary computational bottleneck.
In our implementation of the randomized approach, we entirely avoid
forming the Khatri-Rao matrix and so greatly reduce the expense of the
least squares solve.
We refer to this method as CPRAND.

It is oftentimes a good idea to apply an FJLT to the least squares
problem to ensure incoherence. We explain how this can be done in a
preprocessing step rather than for every least squares solve. 
Assuming that we use an FFT in the FJLT, we have to reverse part of
the transformation for each linear solve. However, we need only apply
the inverse transform to the small sampled matrix at trivial cost.
We refer to this method as CPRAND-MIX.
A small disadvantage of the FFT is that it transforms a real-valued
problem to be complex-valued, doubling the memory requirement. 
The computational cost difference, however, is negligible.
On the other hand, we could use a real-valued transform, but these
proved to be slower than the FFT in MATLAB with no improvement in
the quality of the CP decomposition.

Checking the stopping condition is also a significant expense.
This is based on the fit of the 
model to the original data and so requires forming the Khatri-Rao
matrix, an expense we avoid in the least squares solves. 
We employ another type of randomization in this case, based on using
just a subsample of the tensor entries for comparison.
Assuming the errors are i.i.d. and drawn from a finite distribution,
then we can approximate the model fit error with reasonable accuracy
and substantially reduced cost, so we employ this stopping condition
in our randomized algorithms.

We have demonstrated the benefits of CPRAND and CPRAND-MIX in both
synthetic and real data experiments, including large-scale tensors of
up to 13~GB in size. The randomized methods are
overall much faster than CP-ALS for equivalent quality 
decompositions. Moreover, the CPRAND methods are much less sensitive to
the initial guess. We conjecture that the randomization prevents
getting stuck in a local minimum caused by overfitting the noise.
The CPRAND-MIX is more expensive to initialize (requiring the
application of an FFT in each mode) but has the advantage of ensuring
coherence. We found that mixing was critical for good performance on
the COIL-100 dataset in \cref{sec:coil-data-set} but unnecessary for
the hazardous gas dataset \cref{sec:hazardous-gases}.
The expense of the mixing is roughly equivalent to computing the HOSVD
initialization. 

Stopping conditions present an interesting dilemma for any randomized
method. 
Since each subproblem is now solved inexactly, 
the fit is no longer monotonically increasing.
In particular, it is difficult to detect when improvement has
stagnated. 
If we know the amount of noise in advance, we can terminate once the
desired fit is achieved. However, this assumes not only that we know
the noise but also that our model is good in the sense that
the data has inherent multilinear structure and the rank is known. 
Instead,
we propose a modification of the standard stagnation metric: track the
best fit and stop when it fails to improve for more than, say, ten
iterations. 

Although there is some theory on the number of samples required for least squares
as in \cref{eqn:numsamples}, they are impractical for most
implementations. 
We lack a rigorous way of estimating a good sample size.
In practice, we have found that a small multiple of
$R$ is sufficient, i.e. 10--100 times $R$.
Clearly, more theoretical work to justify this choice is needed.

Since the CP fitting problem is non-convex, CP-ALS cannot
guarantee global optimality. This is unchanged for randomized
methods. Moreover, as mentioned above, we do not even have a guarantee
of improving the objective function at each step. However, our
experimental results indicate that randomized methods are more robust
to the starting point, so this is a potential advantage and perhaps a
topic for future research.

Many lines of future research remain, in addition to those mentioned
above.
As discussed in \cref{sec:rcpals-cost}, the most expensive part of
CPRAND is extracting the random fibers from the dense tensor (i.e.,
memory operations), so we may consider both algorithmic adjustments or
specialized implementations to alleviate that expense.
We would also like to compare directly to other improved methods for computing CP, such as those in \cite{phan,VeLa16}.
Another topic of investigation is to prove that the mixing operator we develop in
\cref{sec:cprand-mix} is  an FJLT.
An obvious extension is consideration of sparse tensors. In the sparse
case, we never form the Khatri-Rao matrix (see \cite{stop1}), so the
bottlenecks are different.
We note that our mixing process would convert the sparse tensor to a dense one, so we suspect a non-uniform sampling scheme without mixing (as in \cite{spals}) will be more effective for sparse data.
These sketching methods also naturally extend to out-of-core 
algorithms, and may even be used to increase scalability in
distributed memory.  
Finally, it is natural to consider application of 
randomization to other decompositions such as Tucker \cite{Tu66}, tensor train \cite{Os11},
or functional tensor decompositions \cite{ChLeNoRa15,GoKaMa15}.

\section*{Acknowledgment}
We thank Nico Vervliet for generously sharing his scripts to
preprocess the hazardous gases experiment data used in \cref{sec:hazardous-gases}.
We would like to thank Alex Williams for the CP decomposition visualization script used to create \cref{fig:gases}.
This material is based upon work supported by the Sandia Truman Postdoctoral Fellowship and the U.S. Department of Energy, Office of Science, Office of Advanced Scientific Computing Research, Applied Mathematics program.
Sandia National Laboratories is a multi-mission laboratory managed and
operated by Sandia Corporation, a wholly owned subsidiary of Lockheed
Martin Corporation, for the U.S. Department of Energy's National
Nuclear Security Administration under contract DE--AC04--94AL85000.
\bibliographystyle{siamplain}
\bibliography{tensbib}

\end{document}